\documentclass{siamart251216}
\allowdisplaybreaks
\usepackage{amssymb}
\usepackage{graphics,epsfig}
\usepackage{todonotes}
\usepackage{hyperref}
\usepackage[scr=boondox]{mathalpha}
\usepackage{pgfplots}
\pgfplotsset{compat=1.18}

\providecommand{\dist}{\operatorname{dist}}
\providecommand{\diam}{\operatorname{diam}}
\providecommand{\patch}[1]{\omega_{#1}}
\newcommand{\matId}{\underline{\mathbf{I}}}
\providecommand{\Id}{\operatorname{I}}
\providecommand{\cl}[1]{\overline{#1}}
\providecommand{\Fbd}{\mathcal{F}_h^{\partial}}
\providecommand{\Ebd}{\mathcal{E}_h^{\partial}}

\DeclareMathOperator\Grad{\mathbf{grad}}
\DeclareMathOperator\Jac{\underline{\mathbf{D}}}

\newcommand\Ld{L^2(\Omega)}
\newcommand\Hu{H^1(\Omega)}
\newcommand\Huh{H^1(\Oh)}
\newcommand\Huo{H^1_0(\Omega)}
\newcommand\Huoh{H^1_0(\Omega_h)}
\newcommand\RE{\mathbb{R}}
\newcommand\Th{\mathcal{T}_h}
\newcommand\ThB{\mathcal{T}_h^B}
\newcommand\OB{\Omega_h^B}
\newcommand\OBpatch{\Omega_{h,\delta}}
\newcommand\pO{{\partial\Omega}}
\newcommand\Oh{{\Omega_h}}
\newcommand\OOh{\Omega\cap\Oh}
\newcommand\pOh{{\partial\Oh}}
\newcommand\Obar{\Omega\cup\Oh}
\newcommand\OmOh{\Omega\setminus\Oh}
\newcommand\OhmO{\Oh\setminus\Omega}

\newcommand\vit{\tilde v^I}
\newcommand\vi{v^I}
\newcommand\Tt{{\widetilde T}}
\newcommand\Spt{S^+_T}
\newcommand\Smt{S^-_T}
\newcommand\extension{\mathcal E}
\newcommand\R{\mathcal R}

\newcommand\lunette{\mathcal{L}_h}
\newcommand\lunetteconv{\lunette^+}
\newcommand\lunetteconc{\lunette^-}

\newcommand\lenoir[1]{#1^\#}
\newcommand\bfx{\mathbf{x}}
\newcommand\resto{\mathrm{R}}
\newcommand\dx{d\bfx}
\newcommand\dxO{d\bfx_\Omega}
\newcommand\dxOh{d\bfx_\Oh}
\newcommand\n{\mathbf{n}}
\newcommand\dual[2]{\langle#1,#2\rangle}
\newcommand\insieme{\omega}

\newtheorem{remark}{Remark}
\newtheorem{example}{Example}[section]
\newtheorem{assumption}{Assumption}

\newcommand\LG{\begin{color}{blue}}
        \newcommand\GL{\end{color}}

\renewcommand\fnsymbol[1]{\ensuremath{\ifcase\value{#1}\or *\or **\or ***\or ****\else\#\fi}}

\begin{document}

\title{Finite element approximation on piecewise smooth domains}

\author{Daniele Boffi\thanks{Computer, Electrical and Mathematical Sciences
and Engineering, King Abdullah University of Science and Technology (KAUST),
Saudi Arabia; Dipartimento di Matematica ``F.~Casorati'', University of Pavia,
Italy; IMATI-CNR ``Enrico Magenes'', Pavia, Italy
(\email{daniele.boffi@kaust.edu.sa})}
\and
Lucia Gastaldi\thanks{DICATAM, Universit\`a di Brescia, Italy; IMATI-CNR
``Enrico Magenes'', Pavia, Italy
(\email{lucia.gastaldi@unibs.it}}
}

\maketitle

\begin{abstract}
The approximation theory by finite elements is usually described on the
simplified geometry of polygonal or polyhedral domains. While avoiding the
question of boundary approximation makes the analysis simpler, real
applications may involve piecewise smooth domains and in some cases the
extension of classical results to general boundaries may be tricky.

In this paper we discuss in a systematic way the approximation properties and
the error analysis of standard non-isoparametric finite element spaces on
piecewise curvilinear domains in two and three dimensions. We show in detail
how the geometric error introduces a barrier for the $H^1$ and $L^2$ error of
the solution of both Dirichlet and Neumann elliptic problems independently of
the regularity of the problem and of the degree of the finite element spaces
used.
\end{abstract}

\begin{keywords}
Elliptic problems. Finite elements. Geometrical error.
\end{keywords}

\begin{MSCcodes}
65N30
\end{MSCcodes}

\section{Introduction}
\label{se:intro}

The aim of this paper is to discuss in a systematic way the approximation
properties and the error analysis of finite element spaces for the
discretization of partial differential equations defined on piecewise
curvilinear domains in two and three dimensions.

We consider a standard elliptic problem with Dirichlet or Neumann boundary
conditions and perform a finite element analysis on meshes of straight
(non-isoparametric) triangles and tetrahedra.

This is a classical question, already discussed in the pioneering works of
Ciarlet--Raviart~\cite{CiR72} and Strang--Fix~\cite{strangfix}. Other
relevant references include~\cite{Zla73,Zla74,Lenoir,Ber89,BrS94,Cia91}. When
Dirichlet boundary conditions are transferred from the boundary of the
physical domain to the boundary of the computational domain, it is part of the
finite element common knowledge, and it is proved rigorously in some
particular cases, that the $H^1$ norm of the best approximation is capped by
$O(h^{3/2})$, regardless of the regularity of the function to be approximated.

A common cure for this suboptimality is to resort to suitable isoparametric
approximation of the boundary. It is another well-established subject, and
part of the finite element folklore, confirmed under appropriate hypothesis,
that this is sufficient to restore optimal approximation (see, for
instance,~\cite{Scott1975,Lenoir,Cia91,ern-guermond}).
Several other numerical analysis subjects deal with geometrical errors. For
instance, unfitted schemes constitute a growing area where the
transfer of information from different points in the domains is needed,
starting from the Nitsche's method~\cite{nitsche} and the fictitious domain
method~\cite{FD}, and other techniques, such as those
from~\cite{Babuska1973, barellNeunfitted, barellNeuRobin, BrKing94,
burmanhansbo2010, cutfem-acta, SBM, ABG2024, OSZ19}.

In this paper we are interested in the classical finite element method
(straight-sided elements, no isoparametric mapping) and in the sources of
suboptimality for the best approximation and the error of the computed
solution. We remove the convexity and quasi-uniformity assumptions that appear
in classical references and include an $L^2$ duality argument.
We prove rigorously under very general assumptions the already mentioned
$O(h^{3/2})$ barrier for the $H^1$ best approximation under prescribed
boundary conditions and complement it with an $O(h^{5/2})$ barrier for the
$L^2$ best approximation error. Moving to the error of the discretization of a
Dirichlet elliptic problem, we confirm that the $H^1$ barrier is set at
$O(h^{3/2})$, while we show a (less known and perhaps unexpected) stricter
$O(h^2)$ barrier for the $L^2$ error. The same barriers hold for the solution
to the Neumann problem. It is interesting to notice that the $O(h^{3/2})$
barrier for Dirichlet problems comes directly from the same cap for the best
approximation, while the $O(h^{3/2})$ barrier for Neumann problems is not due
to the best approximation, since no boundary conditions are imposed on the
finite element spaces, but derives from a consistency term related to the
geometrical error.

We believe that the novelty of our presentation consists in a clean,
self-contained treatment of the geometric error when a straight-sided mesh
sequence approximates a piecewise smooth (not just convex) domain, starting
from best approximation and Galerkin error estimates, including an $L^2$
duality argument, both for Dirichlet and Neumann boundary conditions.
In particular, we believe that the $L^2$ duality argument, which shares some
of the techniques of~\cite{Lenoir}, presents a result that is not widely known
in our community. Moreover, we make an interesting observation for the second
order approximation of the Neumann problem in two dimensions. Under suitable
hypotheses, valid for instance for a uniform triangulation of a circle, the
optimal $O(h^2)$ rate is obtained for the energy norm error of the solution of
the Neumann problem. This is a very special situation that holds only in two
dimensions and for quadratic polynomials. Namely, the approximation with third
order finite elements is asymptotically worse than with second-order ones.
Further investigation of this phenomenon will be part of a future project.

The structure of the paper is as follows. For the Dirichlet setting,
Section~\ref{se:setting} introduces our problem and sets the notation with a
careful description of the very general assumptions on the domain and the
mesh.
Section~\ref{se:geom} deals with the quantities that will be used for the
estimation of the geometrical error.
The core of our analysis is given by Sections~\ref{se:bestapprox},
\ref{se:main}, and~\ref{se:DL2}, where we discuss the best approximation,
$H^1$ Galerkin, and $L^2$ Galerkin error, respectively, for the Dirichlet
case. The analysis for the Neumann problem, Galerkin approximation both in
$H^1$ and in $L^2$, is presented in Section~\ref{se:neumann}. The extension to
non-homogeneous boundary conditions is sketched in Section~\ref{se:non0}.
Finally, a comprehensive set of numerical tests is reported in
Section~\ref{se:num}.

\section{Dirichlet boundary conditions: setting of the problem}
\label{se:setting}

Most mathematical textbooks on finite element theory start with the
discretization of the Laplace equation on a domain $\Omega$. A common
statement is that for the sake of simplicity $\Omega$ is considered as a
polygonal or polyhedral Lipschitz domain. We aim at investigating in more
detail what happens when the hypotheses on the domain are more general.

We start by discussing a Dirichlet problem for a second order elliptic
equation.
Without loss of generality we consider the Laplace problem: find $u$ such that
\begin{equation}
\aligned
&-\Delta u=f&&\text{in }\Omega\\
&u=g&&\text{on }\pO,
\endaligned
\label{eq:poissong}
\end{equation}
where $\Omega$ is a weakly Lipschitz (bi-Lipschitz image of a half-ball near
each boundary point) domain in $\RE^d$ (d=$2,3$) with a piecewise
regular boundary.

To give a more precise characterization of a piecewise regular boundary, we
introduce suitable assumptions. While in two dimensions the definition is
immediate, its generalization to three dimensions requires dealing with
boundary patches and their boundary edges.

\begin{assumption}
The boundary of $\Omega$ is piecewise $C^2$, that is
$\partial\Omega=\bigcup_{j=1}^{J}\cl{\Gamma_j}$, where the \emph{patches}
$\Gamma_1,\dots,\Gamma_J$ are pairwise disjoint.
For $d=2$ the patches are $C^2$ arcs and the \emph{skeleton}
$\Sigma:=\bigcup_{j}\partial\Gamma_j$ is the set of their endpoints.
When $d=3$, $\Sigma$ consists of finitely many \emph{edges} $E_1,\dots,E_M$:
each $E_m$ is either a closed $C^2$ curve or a $C^2$ arc whose two endpoints
are corners (possibly the same corner), and
$E_m\subset\cl{\Gamma_j}\cap\cl{\Gamma_{j'}}$ for exactly two patches, which
we call \emph{adjacent} to $E_m$.
\label{as:domain}
\end{assumption}

We recall the variational formulation of~\eqref{eq:poissong}. We assume that
$f\in\Ld$ (more general cases could be considered),
$g\in H^{1/2}(\pO)$, and we consider the space $V=\Huo$, the subspace
of $\Hu$ with vanishing trace on $\pO$.
Given a lifting $u_g\in\Hu$ of $g$ to $\Omega$, we look for $u=u_0+u_g$, with
$u_0\in V$ the solution to
\begin{equation}
a(u_0,v)=(f,v)_\Omega-a(u_g,v)\quad\forall v\in V,
\label{eq:poissongvar}
\end{equation}
where
\[
\aligned
&(u,v)_D=\int_D uv\,dx&&\forall u,v\in L^2(D)\\
&a(u,v)=(\Grad u,\Grad v)_\Omega&&\forall u,v\in V.
\endaligned
\]

We consider simplicial triangulations for the numerical approximation
of~\eqref{eq:poissongvar}.
A typical finite element mesh is a triangulation $\Th$ with the property that
each boundary vertex is located on the physical boundary $\pO$ of the
domain $\Omega$. We assume, as is natural, that $\Th$ is consistent with
the parts $\Gamma_j$ that constitute the boundary $\pO$. While the formal
definition is immediate in 2D, the three-dimensional situation should be
carefully described. We state all properties in the following assumption,
where we denote by $\Fbd$ the set of \emph{boundary facets} (edges if $d=2$,
triangles if $d=3$) and, if $d=3$, by $\Ebd$ the set of \emph{boundary edges}
(mesh edges contained in $\partial\Oh$). The set of elements $T$ whose
boundary contains a boundary facet or edge is denoted by $\ThB$ and $\OB$ is
the region occupied by all elements in $\ThB$. Moreover, given a positive
$\delta\le\diam(\Omega_h)$, we denote by $\OBpatch$ the layer containing all
points in $\Oh$ with distance from $\partial\Oh$ less than $\delta$.
\begin{assumption}

The mesh $\Th$ is a family of closed simplices with the following properties.

\begin{itemize}

\item[(M1)]
Every corner of $\partial\Omega$ is a vertex of $\Th$.
An element has at most one boundary facet, and if it has one, its boundary
edges when $d=3$ are the edges of that facet; an element without a boundary
facet has at most one boundary edge when $d=3$ or one boundary vertex when
$d=2$.

\item[(M2)]
Each facet $F\in\Fbd$ has all vertices on a single $\Gamma_j$. We denote by
$\Gamma_{j,h}$, for $j=1,\dots,J$, the union of all the facets $F$ with all 
the vertices on $\Gamma_j$, so that $\bigcup_{j=1}^J\Gamma_{j,h}=\pOh$.

\item[(M3)]
For $d=2$, the discrete patch $\Gamma_{j,h}$ is the
polygonal curve through the vertices of $\Th$ lying on $\cl{\Gamma_j}$, in
their order along $\Gamma_j$, and the discrete skeleton $\Sigma_h$ is the set of
vertices shared by two boundary edges belonging to different $\Gamma_{j,h}$.

\item[(M4)]
For $d=3$, a boundary edge shared by two boundary facets belonging to two
different $\Gamma_j$ is called a \emph{skeleton edge}; $\Sigma_h$ denotes the
union of the skeleton edges.
For every edge $E_m$, the union $E_{m,h}$ of the skeleton edges
whose endpoints lie on $E_m$ is a polygonal curve whose vertices are exactly
the vertices of $\Th$ lying on $E_m$, joined in their order along $E_m$
(closed if $E_m$ is closed, with the same endpoints as $E_m$ otherwise); every
skeleton edge belongs to exactly one $E_{m,h}$.
\end{itemize}
\label{as:lunette}
\end{assumption}
Unless isoparametric approximations are taken into account, the computational
domain $\Oh$ (the union of all elements of the mesh $\Th$) is a polygon or a
polyhedron with straight sides/faces.
Clearly, if $\Omega$ is convex, then $\Oh$ is inscribed in $\Omega$,
while in general there might be parts of $\Oh$ that are outside $\Omega$.

A finite element discretization of~\eqref{eq:poissongvar} involves the
following ingredients: a finite element space $W_h$, approximations $f_h$ and
$g_h$ of our data, and a discrete lifting $u_{g,h}\in W_h$ of $g_h$ to
$\Oh$. We denote by $k$ the maximum degree of the piecewise polynomials in
$W_h$. After considering the subspace $V_h$ of $W_h$ consisting of the
functions vanishing on $\pOh$, that is,
\[
V_h=\{v\in W_h:v|_{\pOh}=0\},
\]
the discrete solution $u_h$ is obtained as the sum of $u_{g,h}$ and the
solution $u_{0,h}\in V_h$ to
\begin{equation}
a_h(u_{0,h},v)=(f_h,v)_{\Oh}-a_h(u_{g,h},v)\quad\forall v\in V_h,
\label{eq:poissongvarh}
\end{equation}
where
\[
a_h(u,v)=(\Grad u,\Grad v)_{\Oh}\quad\forall u,v\in V_h.
\]

One of the first questions for the implementation of any finite element code
is the definition of $f_h$ and $g_h$ when $\Omega$ is not already a
polygon/polyhedron and is replaced by $\Oh$. At this level we are not
interested in numerical integration, but only in the error induced by the
geometry approximation.

If $\Omega$ is convex, then $\Oh$ is contained in $\Omega$ and it is
immediate to define $f_h$ as the restriction of $f$ to $\Oh$. Otherwise,
the definition of $f_h$ involves an extension of $f$ to the union
$\Obar$. In the presence of lower order methods an extension by zero is
viable, while if higher accuracy is desired, then we expect $f$ to be more
regular than simply being in $\Ld$ and we can consider more sophisticated
extensions if the domain is smooth enough (see, for
instance,~\cite{Calderon,Stein,Jones,Rogers}).
Since our main focus in this paper is on boundary conditions, from now on
we consider $f$ already defined on $\Obar$ and, consequently, we take $f_h=f$.

When $g=0$ the natural choice is to consider also $g_h=0$. Otherwise, there
are at least two options. If $g$ is the restriction to $\pO$ of a
function $\overline{g}$ defined on $\Obar$, then $g_h$ can be taken as the
restriction of $\overline{g}$ to $\pOh$. From a practical point of view, the
situation is different when lowest- or higher-order methods are used. For
instance, in the case of continuous piecewise linear finite elements, the
nodes are placed at the vertices of the elements, and the nodes on
$\partial\Omega_h$ are also on $\partial\Omega$. Hence it is possible to
define $g_h$ on $\partial\Omega_h$ by just interpolating $g$.
On the other hand, in the case of higher-order elements, some internal nodes
of the boundary facets are on $\partial\Omega_h$ but do not necessarily lie on
$\partial\Omega$. This implies that the value of $g_h$ on those nodes should
be somehow deduced from the exact value of $g$ on $\partial\Omega$.
In all these cases we have that $u-u_h$ vanishes neither on $\partial\Omega$
nor on $\partial\Omega_h$ and the analysis should take into account this
inconsistency.

A first step towards the analysis of the error between the continuous solution
$u\in\Hu$ and the discrete one $u_h\in W_h$ involves the study of the best
approximation of $u$. In Section~\ref{se:bestapprox} we perform this task and
continue later with the convergence analysis.

\section{Geometrical properties related to $\Omega$ and $\Oh$}
\label{se:geom}

In this section we collect results related to the mismatch between $\Omega$
and $\Oh$. We discuss in particular the behavior of boundary elements; some
possible configurations are shown in Figures~\ref{fg:onecase2D}
and~\ref{fg:onecase3D}.

\begin{figure}

\begin{center}
{\tikzset{every picture/.append style={scale=0.65}}
\providecolor{bnd}{RGB}{170,40,40}%
\providecolor{lift}{RGB}{31,90,160}%
\providecolor{lightlift}{RGB}{200,218,240}%
\begin{tikzpicture}[
    x=1cm, y=1cm,
    boundary/.style={draw=bnd, line width=1.1pt, line cap=round},
    element/.style={draw=black, line width=0.7pt, line join=round},
    lbl/.style={font=\footnotesize, inner sep=1.5pt},
    leader/.style={draw=black!60, line width=0.3pt},
  ]
  \pgfmathsetmacro\tdL{4.6}      
  \pgfmathsetmacro\tdC{0.45}     
  \pgfmathsetmacro\tdK{1.45}     
  \pgfmathsetmacro\tdE{0.55}     
  \pgfmathsetmacro\tdEnd{\tdL+\tdE}
  \tikzset{declare function={bd(\x)=\tdK*\tdL*(\x/\tdL)*(1-\x/\tdL)*(\x/\tdL-\tdC);}}
  \coordinate (a1) at (0,0);
  \coordinate (a2) at (\tdL,0);
  \coordinate (a3) at (1.95,-3.1);
  \fill[lightlift] (a3) -- plot[domain=0:\tdL, samples=120] (\x,{bd(\x)}) -- cycle;
  \draw[boundary] plot[domain=-\tdE:\tdEnd, samples=160] (\x,{bd(\x)});
  \draw[element] (a1) -- (a2) -- (a3) -- cycle;
  \foreach \v in {a1,a2,a3} \fill (\v) circle (1.4pt);
  \path (a1) node[lbl, below left=1pt] {$a_1$}
        (a2) node[lbl, above right=1pt] {$a_2$}
        (a3) node[lbl, below=1pt] {$a_3$}
        (barycentric cs:a1=1,a3=1) node[lbl, left=3pt] {$T$}
        (barycentric cs:a2=1,a3=1) node[lbl, right=3pt, text=lift] {$\tilde T$}
        (-\tdE,{bd(-\tdE)}) node[lbl, left=1pt, text=bnd] {$\partial\Omega$};
  \pgfmathsetmacro\tdXp{0.775*\tdL}
  \pgfmathsetmacro\tdXm{0.20*\tdL}
  \node[lbl, anchor=south] (lp) at (\tdXp+0.15,0.95) {$\tilde T\setminus T$};
  \draw[leader] (lp.south) -- (\tdXp,{bd(\tdXp)/2});
  \node[lbl, anchor=south] (lm) at (\tdXm+0.35,0.95) {$T\setminus\tilde T$};
  \draw[leader] (lm.south) -- (\tdXm,{bd(\tdXm)/2});
\end{tikzpicture}%
}
\end{center}

\caption{A boundary element $T$ and $\Tt$ in two dimensions.}
\label{fg:onecase2D}
\end{figure}

\begin{figure}

\begin{center}
{\tikzset{every picture/.append style={scale=0.65}}
\input{fig_TtildeT_pair.tex}}
\end{center}

\caption{An element $T$ with a face (left) or an edge (right) on $\partial\Oh$
and the corresponding $\Tt$ in three dimensions. The shaded part of the front
face of $\Tt$ is behind the corresponding face of $T$, while the other part is
in front.}
\label{fg:onecase3D}
\end{figure}

We consider a mapping $\Phi_h:\Oh\to\Omega$ acting on boundary elements
$T\in\ThB$ as shown in Figure~\ref{fg:onecase2D} in two dimensions
and Figure~\ref{fg:onecase3D} in three dimensions, where $\Tt$ is the
image of $T$:
\begin{equation}
\Tt=\Phi_h(T).
\label{eq:Tt}
\end{equation}
The construction of $\Phi_h$ can be performed by putting together tools
already introduced in the literature. On elements in $\OB$ not touching a
skeleton edge the construction is given by Lenoir~\cite{Lenoir} for smooth
boundaries (see also Scott~\cite{Scott1975} in two dimensions). Related works
are~\cite{Dubois1990,ElliottRanner2013,CaubetGhantousPierre2023,
GordonHall1973,LiQiuXieYu2024,BonitoCasconMekchayMorinNochetto2016}.

\begin{remark}
While in two dimensions $\Phi_h$ preserves the edges of $T\in\ThB$ that are
not along the boundary, in three dimensions the internal faces of a boundary
tetrahedron $T\in\ThB$ are mapped to possibly non-planar \emph{cones}. It
follows that even for a convex domain is not true that $\Phi_h(T)$ contains
$T$; hence the mismatch between $\Omega$ and $\Oh$ cannot be described in
terms of $\Tt\setminus T$ and $T\setminus\Tt$.
A better description introduces, for each $T\in\ThB$, the part of $\Tt$
outside $\Oh$, namely $\Spt=\Tt\setminus\cl\Oh$, and the corresponding part of
$T$ outside $\Omega$, that is $\Smt=T\setminus\cl\Omega$. In two dimensions
$\Spt=\Tt\setminus T$ and $\Smt=T\setminus\Tt$.
\label{re:mismatch}
\end{remark}

We denote the collection of all regions of the form $\Spt$ and $\Smt$ by
$\lunette$. This set is naturally formed by the following two subsets:
\[
\aligned
&\lunetteconv=\{\omega\in\lunette:\omega\subset\OmOh\}\\
&\lunetteconc=\{\omega\in\lunette:\omega\subset\OhmO\},
\endaligned
\]
so that $\lunetteconv$ is the set of the $\Spt$'s and $\lunetteconc$ of the
$\Smt$'s.

The usual notation $|\omega|$ will be adopted for the $d$-measure of a set
$\omega\subset\RE^d$.
By abuse of notation, we indicate by $|\lunette|$ the measure of the region
occupied by the elements in $\lunette$. This region is equal to
$(\OmOh)\cup(\OhmO)$. Analogously, we will use
$|\lunetteconv|$ and $|\lunetteconc|$ so that we have
\[
|\lunette|=|\lunetteconv|+|\lunetteconc|.
\]

Estimates involving the measure of $\lunette$ require a bound on the
geometrical error. We state this classical result in the following lemma,
since it will be used frequently in the rest of the paper.

\begin{lemma}

If $\partial\Omega$ is piecewise $C^2$ and the mesh is compatible with the
domain, see Assumption~\ref{as:lunette}, then for any $\omega\in\lunette$ the
distance of $\partial\Oh\cap\partial\omega$ from
$\partial\Omega\cap\partial\omega$ is of order $h_\omega^2$.
Consequently, $|\omega|\le Ch_\omega^{d+1}$ and $|\lunette|$ is of order
$h^2$.

\label{le:lunette}
\end{lemma}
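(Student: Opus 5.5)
The plan is to reduce everything to a local chordal estimate on each boundary facet: a piece of a $C^2$ patch that is interpolated at mesh vertices lies within $O(h^2)$ of its interpolant. Then $\omega\in\lunette$ is a thin region between two surfaces with this separation. Fix $\omega\in\lunette$, associated with an element $T\in\ThB$, and let $h_\omega\simeq h_T$.

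First I treat a single boundary facet $F\subset\Gamma_{j,h}$, with vertices on $\cl{\Gamma_j}$ by (M2). Near $F$, I represent $\Gamma_j$ (extended as a $C^2$ surface across its edges if needed) as a graph $x_d=\psi(x')$ over the hyperplane containing $F$, or over a nearby tangent plane. Here $\psi\in C^2$ with $\|D^2\psi\|_\infty\le C$, uniformly in $h$ by compactness of the finitely many patches. The facet $F$ is the graph of the affine function $\ell$ that interpolates $\psi$ at the vertices of $F$. The standard Lagrange interpolation estimate on the projected simplex $F'$, of diameter $\le h_T$, gives
\[
\|\psi-\ell\|_{L^\infty(F')}\le C h_T^2\|D^2\psi\|_\infty .
\]
Hence every point of the corresponding piece of $\partial\Omega$ lies within $Ch_T^2$ of $F$, and conversely. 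In three dimensions the skeleton edges are handled the same way: by (M4), each $E_{m,h}$ is the chordal interpolant of the $C^2$ curve $E_m$, which gives one-dimensional interpolation error $Ch^2$.

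Next I need $\partial\omega$ to consist exactly of pieces of $\pOh$ and $\pO$ lying over the same facets, together with the lateral surfaces produced by $\Phi_h$. Since $\omega=\Spt$ or $\Smt$ is bounded by $\partial\Oh\cap\partial\omega$ and $\partial\Omega\cap\partial\omega$, the facet estimate gives the first claim: $\dist\le Ch_\omega^2$. For the measure, $\omega$ is contained in the set of points between the graphs of $\psi$ and $\ell$ over a region of $(d-1)$-measure $\le Ch_\omega^{d-1}$, with vertical thickness $\le Ch_\omega^2$. Therefore
\[
|\omega|\le Ch_\omega^{d-1}h_\omega^2=Ch_\omega^{d+1}.
\]

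For the global statement I sum over $\omega\in\lunette$. The facets have total area $\sum_F h_F^{d-1}\le C|\pO|$, and each point lies in boundedly many $\omega$'s by (M1). This gives
\[
|\lunette|\le C\sum_\omega h_\omega^{d-1}h_\omega^2\le Ch^2\sum_F|F|\le Ch^2 .
\]

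The main obstacle is the neighborhood of the skeleton $\Sigma$ and the corners, and in 3D the elements touching the boundary only through an edge. There the region $\omega$ is not a single graph over one facet. I would first use (M1)–(M4) to split $\omega$ into at most a bounded number of pieces, each lying over one facet or along one skeleton edge. The pieces along a skeleton edge have cross-section $O(h^2)\times O(h^2)$ times length $h$, which is even smaller. Corners, being mesh vertices by (M1), contribute nothing extra. I would also rely on the weak Lipschitz assumption, so that the two adjacent patches meet at a nondegenerate angle and the graph representation over each facet remains valid with uniform constants. This angle condition is the point where I expect the most care to be needed.
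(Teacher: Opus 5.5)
Your proposal is correct and follows essentially the paper's route. The core steps match: a local $C^2$ graph representation over each boundary facet gives an $O(h_\omega^2)$ chord-to-boundary distance, and the thin-box bound gives $|\omega|\le Ch_\omega^{d-1}h_\omega^2$. Where you cite the $L^\infty$ linear interpolation estimate, the paper proves the same bound directly by a Taylor expansion at the extremum of $|\phi|$. The only other differences are that you obtain the global bound by summing the local estimates over boundary facets, whereas the paper places $\lunette$ inside a layer of width $Ch^2$ around $\partial\Omega$, and that you discuss the skeleton and edge-only elements explicitly, which the paper passes over.
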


\begin{proof}

This is a classical result which we report here to help the reader since
it is often quoted without proof.

We start from $d=2$, so that the boundary elements are as in
Figure~\ref{fg:onecase2D}.
Let us consider the $C^2$ function $\phi:[0,\ell]\to\RE$ representing
$\partial\Omega$ between $a_1$ and $a_2$ so that $\phi(0)=0$ corresponds to
$a_1$ and $\phi(\ell)=0$ corresponds to $a_2$. Let $\ell^*\in(0,\ell)$ be a
maximum point for $|\phi|$ so that $\phi'(\ell^*)=0$. It turns out that
\[
0=\phi(0)=\phi(\ell^*)-\ell^*\phi'(\ell^*)+(\ell^*)^2\phi''(\xi)/2
\]
for some $\xi\in(0,\ell)$. Since $\ell^*<\ell\le h_\omega$ and $\phi''$ is
bounded we get
\begin{equation}
|\phi(\ell^*)|\le Ch_\omega^2.
\label{eq:hquadro}
\end{equation}

This proof easily extends to $d=3$ where the function $\phi$ is defined on a
two-dimensional region $T$ of diameter $h_\omega$. After identifying a
maximum point $(\ell^*,\mathscr{m}^*)$ of $|\phi|$, the same argument as before can be
applied to the restriction of $\phi$ to a segment connecting a vertex of $T$
to $(\ell^*,\mathscr{m}^*)$.

The set $\omega$ is a region contained in a box with base of size
$O(h_\omega^{d-1})$ and height $O(h_\omega^2)$, hence $|\omega|\le
Ch_\omega^{d+1}$.

The result concerning $|\lunette|$, that is, the measure of
$(\OmOh)\cup(\OhmO)$, is obtained by observing that
the union of the regions bounded by $\partial\Omega$ and $\partial\Oh$
is always contained in a boundary layer of thickness $Ch^2$ centered at
$\partial\Omega$, so that
\[
|\lunette|\le C|\partial\Omega|h^2.
\]
\end{proof}

We state in the next lemma and proposition (see~\cite{Lenoir}) some
consequences related to the behavior of the map $\Phi_h$.

\begin{lemma}\label{le:local}
There is $h_0>0$ such that for $h\le h_0$ and $T\in\ThB$:
\begin{itemize}
\item[(a)] $|S_T^\pm|\le Ch_T^{d+1}$ and $|\Phi_h^{-1}(\Spt)|\le Ch_T^{d+1}$;
\item[(b)] $\dist(y,\partial\Oh)\le Ch_T^2$ for every
  $y\in\Smt\cup\Phi_h^{-1}(\Spt)$;
\item[(c)] for $v\in V_h$, $y\in T$, and $z\in\R^d$ with $|y-z|\le Ch_T^2$,
\[
|v(y)-v(z)|\le C|y-z||T|^{-1/2}\|\Grad v\|_{L^2(\patch T)};
\]
in particular
$\|v\|_{L^\infty(S)}\le Ch_T^2\,|T|^{-1/2}\|\Grad v\|_{L^2(\patch T)}$
for $S=\Smt$ and for $S=\Phi_h^{-1}(\Spt)$,
\end{itemize}
where $\patch T$ denotes the union of elements whose closure intersects
$\cl{T}$.
\end{lemma}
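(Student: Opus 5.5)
We rely on the standard properties of Lenoir's construction of $\Phi_h$ for $h\le h_0$. On each $T\in\ThB$, $\Phi_h$ is a $C^1$ perturbation of the identity: $\|\Phi_h-\Id\|_{L^\infty(T)}\le Ch_T^2$ and $\|\Jac\Phi_h-\matId\|_{L^\infty(T)}\le Ch_T$. Near skeleton edges the construction is glued with a blending (Gordon--Hall type) map, and Assumption~\ref{as:lunette} ensures it enjoys the same bounds. In particular $\det\Jac\Phi_h$ is bounded above and below uniformly once $h_0$ is small. We take these bounds as the input. Then the three items follow in order (b) $\Rightarrow$ (a) $\Rightarrow$ (c), with the geometric estimate of Lemma~\ref{le:lunette} as the basic tool.

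For (b), take $y\in\Smt=T\setminus\cl\Omega$. Such a point lies between a boundary facet (or edge) of $T$ and the portion of $\pO$ it approximates. By Lemma~\ref{le:lunette}, every point of that portion of $\pO$ is within $Ch_T^2$ of $\pOh$. Moving from $y$ along the segment normal to the facet until it hits $\pOh$ therefore has length at most $Ch_T^2$. For $y=\Phi_h^{-1}(x)$ with $x\in\Spt$, the point $x$ lies outside $\cl\Oh$ while $y\in\Oh$. Hence the segment $[y,x]$ crosses $\pOh$, and
\[
\dist(y,\pOh)\le |y-x|=|y-\Phi_h(y)|\le Ch_T^2 .
\]

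For (a), the bound $|S_T^\pm|\le Ch_T^{d+1}$ is already contained in Lemma~\ref{le:lunette}, since $S_T^\pm\in\lunette$ with $h_\omega\le h_T$. For the preimage we use the change of variables
\[
|\Phi_h^{-1}(\Spt)|=\int_{\Spt}|\det\Jac\Phi_h^{-1}|\le C|\Spt|,
\]
which is valid thanks to the uniform bound on the Jacobian. Alternatively, (b) places $\Phi_h^{-1}(\Spt)$ inside a layer of thickness $Ch_T^2$ along a facet of area $O(h_T^{d-1})$.

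For (c), fix $v\in V_h$, $y\in T$ and $z$ with $|y-z|\le Ch_T^2$. Once $h\le h_0$ is small enough that $Ch_T^2\ll h_T$ (using shape regularity), the segment $[y,z]$ stays inside $\patch T$. We integrate along it:
\[
|v(y)-v(z)|\le |y-z|\max_{T'\subset\patch T}\|\Grad v\|_{L^\infty(T')}.
\]
This requires that $v$ be understood extended by zero outside $\Oh$, which is allowed because $v$ vanishes on $\pOh$. It also requires that $v$ be piecewise polynomial and continuous, so the fundamental theorem of calculus applies piecewise along the segment. An inverse inequality on each $T'$ gives $\|\Grad v\|_{L^\infty(T')}\le C|T'|^{-1/2}\|\Grad v\|_{L^2(T')}$. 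Local quasi-uniformity, $|T'|\sim|T|$ for $T'\subset\patch T$, then yields the claimed estimate.

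For the "in particular" statement, take $y\in S$ with $S=\Smt$ or $S=\Phi_h^{-1}(\Spt)$. By (b) there is $z\in\pOh$ with $|y-z|\le Ch_T^2$, and $v(z)=0$. For $S=\Phi_h^{-1}(\Spt)$ we additionally need $y\in T$, which holds by definition. For $S=\Smt\subset T$ the same inequality applies directly, taking the sup over $y$.

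The main obstacle is the first step: securing the Lenoir-type bounds for $\Phi_h$ uniformly up to skeleton edges and corners in three dimensions. There the map must be blended from the two adjacent patch parametrizations and still satisfy $\|\Phi_h-\Id\|_\infty\le Ch_T^2$ with an invertible Jacobian. I would rely on Assumption~\ref{as:lunette}(M4), which guarantees that skeleton edges interpolate $E_m$. With that, the blending error is controlled by the $C^2$ regularity of the edge curves and patches exactly as in Lemma~\ref{le:lunette}.
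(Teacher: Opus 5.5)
The paper gives no proof of this lemma. It quotes the result from Lenoir as a consequence of the properties of $\Phi_h$ (Proposition~\ref{pr:phih}) and of Lemma~\ref{le:lunette}, and your reconstruction follows that route. Most of it is sound:
\begin{itemize}
\item the crossing argument for $\Phi_h^{-1}(\Spt)$ in (b);
\item both parts of (a);
\item (c), via the segment $[y,z]$, extension by zero (legitimate since $v=0$ on $\pOh$), the inverse inequality and $|T'|\sim|T|$ on $\patch T$.
\end{itemize}
You are also right to take the element-wise bound $\|\Phi_h-\Id\|_{L^\infty(T)}\le Ch_T^2$ as input. Proposition~\ref{pr:phih} only states the global bound $Ch^2$, which would not give the $h_T$ scaling on graded meshes.

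\textbf{The gap: (b) for $y\in\Smt$.} The picture ``$y$ lies between a facet and the piece of $\pO$ it approximates, so the normal segment reaches $\pOh$ within $Ch_T^2$'' is the two-dimensional lune picture, and it is asserted rather than proved. Lemma~\ref{le:lunette} only controls the distance between the two boundary pieces $\pOh\cap\partial\omega$ and $\pO\cap\partial\omega$. It says nothing about the distance of interior points of $\omega$ from $\pOh$. In three dimensions the picture breaks down in several ways:
\begin{itemize}
\item $\Smt=T\setminus\cl\Omega$ is also bounded by interior faces of $T$, so the normal segment from $y$ may leave $T$ through a lateral face before reaching the boundary facet;
\item elements of $\ThB$ with only a boundary edge have no facet to be normal to;
\item near skeleton edges a second patch $\Gamma_{j'}$ enters, so $\pO$ is not a single graph over the facet.
\end{itemize}
This is the kind of three-dimensional subtlety Remark~\ref{re:mismatch} points to.

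\textbf{The fix.} Run your $\Spt$ argument in reverse.
\begin{itemize}
\item Since $y\notin\cl\Omega$ and $\Phi_h(y)\in\cl\Omega$, the segment $[y,\Phi_h(y)]$ meets $\pO$ at some $x$ with $|x-y|\le Ch_T^2$.
\item Since $\Phi_h(\pOh)=\pO$ by Proposition~\ref{pr:phih}(a), we have $x=\Phi_h(x')$ with $x'\in\pOh$ and $|x-x'|\le Ch_T^2$. Here the element containing $x'$ has size comparable to $h_T$ by shape regularity once $h\le h_0$.
\item Hence $\dist(y,\pOh)\le|y-x'|\le Ch_T^2$.
\end{itemize}
This uses no description of the shape of $\Smt$. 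It also treats corners and skeleton edges uniformly, because all of that is already encoded in the properties of $\Phi_h$. With (b) repaired this way, the ``in particular'' part of (c) for $S=\Smt$, which is what Theorem~\ref{th:main} uses, follows as you wrote it.
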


\begin{proposition}

Let Assumptions~\ref{as:domain} and \ref{as:lunette} hold. There are $h_0>0$
and $C>0$, depending only on $\Omega$ and the shape-regularity of the mesh,
such that for all $h\le h_0$:
\begin{itemize}
\item[(a)] $\Phi_h$ is a bijection of $\cl{\Oh}$ onto $\cl{\Omega}$, Lipschitz
continuous with a Lipschitz continuous inverse and
$\Phi_h(\cl{\Gamma_{j,h}})=\cl{\Gamma_j}$ for $j=1,\dots,J$;
\item[(b)] $\Phi_h=\Id$, the identity operator, on $\cl{\Oh}\setminus\OB$ and
$|\OB|\le Ch$;
\item[(c)] we have
\[
\aligned
&\|\Phi_h-\Id\|_{L^\infty(\Oh)}\le Ch^2 \\
&\|\Jac\Phi_h-\matId\|_{L^\infty(\Oh)}\le Ch\\
&\bigl\||\det\Jac\Phi_h|-1\bigr\|_{L^\infty(\Oh)}\le Ch
\endaligned
\]
so that
\[
\left\|\frac{\Jac\Phi_h\Jac\Phi_h^{\top}}{|\det\Jac\Phi_h|}-\matId\right\|_{L^\infty(\Oh)}\le
Ch,
\]
where $\Jac$ and $\matId$ are the Jacobian and identity matrices,
respectively.
\end{itemize}
\label{pr:phih}
\end{proposition}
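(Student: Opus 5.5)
The plan is to construct $\Phi_h$ element by element, following Lenoir's construction away from the skeleton and modifying it near $\Sigma_h$. The first step fixes, for each patch $\Gamma_j$, a $C^2$ extension $\tilde\Gamma_j$ slightly beyond $\cl{\Gamma_j}$ together with a local $C^2$ projection $\pi_j$ onto it along a transversal direction field. For $d=3$ we also need a projection $\pi_{E_m}$ onto each edge $E_m$. By (M2)--(M4), every boundary facet $F\subset\Gamma_{j,h}$ has its vertices on $\cl{\Gamma_j}$ and every skeleton edge has its endpoints on one $E_m$.

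We first define $\Phi_h$ on $\pOh$. On a skeleton edge we use the projection onto $E_m$. On a boundary facet $F$ we use a blended (transfinite, Gordon--Hall type) map: it agrees with the edge maps on the boundary of $F$ and is otherwise $\pi_j$ plus a correction vanishing at the vertices. Corners are fixed because of (M1). This gives a continuous map $\pOh\to\pO$ with $\Phi_h(\cl{\Gamma_{j,h}})=\cl{\Gamma_j}$. By the interpolation argument of Lemma~\ref{le:lunette}, it satisfies $|\Phi_h-\Id|\le Ch_T^2$ and $|\nabla_{\tau}(\Phi_h-\Id)|\le Ch_T$ on each facet. The tangential derivative bound comes from the fact that the difference between a $C^2$ surface and its vertex interpolant is $O(h^2)$ in value and $O(h)$ in slope.

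Next we extend into $T\in\ThB$ by Lenoir's rule $\Phi_h(x)=x+\lambda^{*}(x)^{s}\,\bigl(\psi(\hat x)-\hat x\bigr)$. Here $\hat x$ is the projection of $x$ onto the boundary facet (or edge) along the barycentric structure, $\lambda^*$ is the sum of the barycentric coordinates of the boundary vertices, and $s\ge 2$. For elements touching $\pOh$ only at an edge or a vertex, the same formula uses the lower-dimensional boundary piece. Formula compatibility on shared interior faces holds because the displacement depends only on the common boundary sub-simplex. Hence $\Phi_h$ is continuous and piecewise $C^1$, and it equals $\Id$ off $\OB$, which is a layer of width $Ch$, so $|\OB|\le Ch$. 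This gives (b).

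The estimates in (c) follow by the chain rule and shape regularity. The displacement is $O(h_T^2)$. Differentiating $\lambda^*$ costs a factor $h_T^{-1}$, which gives a gradient of order $h_T$. So $\|\Jac\Phi_h-\matId\|_\infty\le Ch$. The determinant and the bound on $\Jac\Phi_h\Jac\Phi_h^\top/|\det\Jac\Phi_h|$ then follow by expanding around the identity.

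For (a), for $h\le h_0$ we have $\|\Jac\Phi_h-\matId\|\le 1/2$, so $\Phi_h$ is a local bi-Lipschitz homeomorphism on each element. It maps $\pOh$ bijectively onto $\pO$, since on each patch it is a graph map close to the identity, and the patches match along the skeleton. A degree, or invariance-of-domain, argument then shows that $\Phi_h$ is a bijection of $\cl\Oh$ onto $\cl\Omega$. The Lipschitz bound on the inverse follows from the uniform Jacobian bound together with connectedness of segments away from an $O(h^2)$ layer.

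The main obstacle is the construction near the skeleton in three dimensions. There two non-coplanar patches meet and the projections $\pi_j$ and $\pi_{j'}$ disagree along $E_{m,h}$. The blending must keep the $O(h)$ tangential derivative bound uniformly despite the corner angle. I would handle this by first fixing $\Phi_h$ on the edges via $\pi_{E_m}$, then correcting on each facet by the linear interpolation of the edge discrepancy. This discrepancy is $O(h^2)$ with $O(h)$ derivative, so the correction preserves the estimates. Injectivity near corners would rely on the weak Lipschitz assumption on $\Omega$.
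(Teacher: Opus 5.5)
Your outline follows the route the paper only points to. The paper gives no proof of this proposition: it attributes the construction to Lenoir away from the skeleton and leaves the skeleton to the blending literature it cites (Gordon--Hall and later works). So the Lenoir extension $x+(\lambda^*)^s(\psi(\hat x)-\hat x)$, the resulting bounds in (b)--(c), and a degree argument for (a) are the right ingredients. The step you identify as the main obstacle, however, is not handled correctly as written.

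On a facet $F\subset\Gamma_{j,h}$ with a skeleton edge $e\subset E_{m,h}$, consider the map $\pi_j(x)+c(x)$, where $c$ is the linear extension into $F$ of the discrepancy $\pi_{E_m}-\pi_j$ on $e$. At an interior point $x$ of $F$, the vector $c(x)$ is a multiple of an $O(h^2)$ chord of $\tilde\Gamma_j$ based near a point of $e$ at distance $O(h)$ from $x$. It is therefore in general not tangent to $\tilde\Gamma_j$ at $\pi_j(x)$, and $\pi_j(x)+c(x)$ lies off the surface by an amount of order up to $h^3$. Hence your boundary map does not send $\pOh$ into $\pO$, the identity $\Phi_h(\cl{\Gamma_{j,h}})=\cl{\Gamma_j}$ fails, and the degree argument fails with it, since that argument needs exactly $\Phi_h(\pOh)=\pO$.

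The repair is to do the blending in a chart of $\tilde\Gamma_j$, or to compose with $\pi_j$ afterwards. Because $\pi_j$ fixes $E_m\subset\tilde\Gamma_j$ pointwise, the edge values are unchanged. Because $\pi_j$ is $C^1$ with differential within $O(h)$ of the identity on vectors tangent to $F$, the $O(h^2)$ and $O(h)$ bounds survive.

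You then still need a topological step that your sketch omits. The image of $\cl{\Gamma_{j,h}}$ must be exactly $\cl{\Gamma_j}$, with nothing spilling onto $\tilde\Gamma_j\setminus\cl{\Gamma_j}$ near $E_m$. Spilling already happens with $\pi_j$ alone: on the flat faces of the cube-minus-sphere domain of Section~\ref{se:num}, $\pi_j$ is the identity and the chords of the circular edge lie outside the face. The step follows from injectivity of the patch map together with the fact that $\partial\Gamma_{j,h}$, the union of the adjacent curves $E_{m,h}$, is mapped homeomorphically onto $\partial\Gamma_j$ by the maps $\pi_{E_m}$. Invariance of domain, or a degree argument on the surface, then gives the image.

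There are two smaller points. First, on a nonconvex domain the global Lipschitz bounds do not follow from the piecewise Jacobian bound alone, and ``connectedness of segments away from an $O(h^2)$ layer'' does not supply them. Argue with path distances instead. The piecewise bounds make $\Phi_h-\Id$ and $\Phi_h^{-1}-\Id$ Lipschitz with constant $Ch$ for the path metrics of $\cl\Oh$ and $\cl\Omega$. The quasiconvexity of the fixed weakly Lipschitz domain $\Omega$, transferred to $\Oh$ by pulling paths back with $\Phi_h^{-1}$, then yields both Euclidean bounds for $h\le h_0$.

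Second, Lenoir's $\hat x$ lands on $\pOh$ only if the vertices of each element lying on $\pOh$ span a sub-simplex contained in $\pOh$. (M1) does not literally exclude, say, an element with a boundary facet whose remaining vertex also lies on $\pOh$. This must be added as a mesh assumption, which the paper also leaves implicit.
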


The following bounds will be used: $|\det D\Phi_h|$, $\|D\Phi_h\|$, and
$\|D\Phi_h^{-1}\|$ are bounded above and below by constants independent of
$h$, so that for every measurable $\insieme\subset\Oh$ and every $v\in H^1(\Oh)$
\begin{equation}\label{eq:cov}
\aligned
&|\Phi_h(\insieme)|\le C|\insieme|\\
&\|v\circ\Phi_h^{-1}\|_{L^2(\Phi_h(\insieme))}\le C\|v\|_{L^2(\insieme)}\\
&\|\Grad(v\circ\Phi_h^{-1})\|_{L^2(\Phi_h(\insieme))}\le C\|\Grad v\|_{L^2(\insieme)}
\endaligned
\end{equation}
and symmetrically for $g\mapsto g\circ\Phi_h$.

For a given $g:\Omega\to\RE$, we consider $\lenoir{g}=g\circ\Phi_h:\Oh\to\RE$,
so that
\begin{equation}
\|\lenoir{g}\|_{L^2(T)}\le C\|g\|_{L^2(\Tt)}.
\label{eq:lenoir}
\end{equation}

\section{Best approximation estimates with boundary conditions}
\label{se:bestapprox}

A fundamental question is how finite element functions $v_h$ defined on a
triangulation that does not fit the domain $\Omega$ can approximate a
smooth function $v$ defined on $\Omega$. Unless $\Omega$ is convex, it is not
guaranteed that $\Oh$ is contained in $\Omega$. It is then natural to consider
approximation estimates on the largest subset of $\Omega$ where both $v$ and
$v_h$ are defined, namely $\OOh$.

We revisit and generalize to any degree (for $d=2,3$) a result sketched
in~\cite{strangfix} about the approximation properties of piecewise quadratic
finite elements in two dimensions. While~\cite{strangfix} assumes a convex
domain $\Omega$ and a quasi-uniform mesh, we do not need such conditions.

\subsection{Interpolation estimates with homogeneous boundary conditions on
general domains}

Let $\Omega\subset\RE^d$ ($d=2,3$) be a domain satisfying
Assumption~\ref{as:domain} and $\Omega_h$ the triangulated domain.

Given a smooth function $v\in H^{k+1}(\Omega)\cap\Huo$, we are interested in
defining its interpolant of order $k\ge2$ in $\Oh$ and in estimating the
interpolation error in $\OOh$. As explained above, the natural choice when
homogeneous boundary conditions are considered on $\partial\Omega$ ($g=0$) is
to impose on discrete functions homogeneous boundary conditions on $\pOh$,
that is, $g_h=0$.

The first issue that we face when trying to define an interpolation operator
is that some interpolation nodes may lie outside $\Omega$. For this reason, we
introduce an extension operator.

\begin{assumption}

There exists an extension operator
$\extension:W^{s,p}(\Omega)\to W^{s,p}(\RE^d)$ such that
$\extension v=v$ in $\Omega$ and
\begin{equation}
\|\extension v\|_{W^{s,p}(\RE^d)}\le C\|v\|_{W^{s,p}(\Omega)}
\label{eq:Stein}
\end{equation}
for $p\in[1,+\infty]$ and all nonnegative integers $s$, with a constant $C$
depending on $d$, $s$, and $\Omega$.

\label{as:ext}
\end{assumption}

\begin{remark}
If $\Omega$ is a bounded domain of class $C^1$ then the existence of the
extension in Assumption~\ref{as:ext} is a consequence of the more general
results by Stein in~\cite[Theorem~5, p.~181]{Stein}.
In our setting of a weakly Lipschitz domain such result was proved by
Jones~\cite{Jones} and later by Rogers~\cite{Rogers} with stability
constant independent of $s$. Essentially, these results cover most of the
cases of interest in applications with the exception of situations like
cusps in 2D or their analogues in 3D.
\end{remark}

We denote by $\vi$ the usual piecewise polynomial interpolant in $\Omega_h$
which is well defined for a convex $\Omega$ and $v\in H^{k+1}(\Omega)$. For a
more general $\Omega$, we can define $\vi$ in $\Omega_h$ as the interpolant of
$\extension v\in H^{k+1}(\RE^d)$ in order to allow for non-convex domains.
Standard interpolation estimates give
\begin{equation}
\aligned
&\|\extension v-\vi\|_{L^2(T)}\le Ch^{k+1}|\extension v|_{H^{k+1}(T)}\\
&|\extension v-\vi|_{H^1(T)}\le Ch^k|\extension v|_{H^{k+1}(T)}
\endaligned
\label{eq:extinterp}
\end{equation}
for all $T\in\Th$.

It is convenient to introduce also the interpolant $\vit\in V_h$ of $v$,
defined as follows:
\begin{enumerate}
\item[i)] $\vit=0$ at all nodes lying on $\pOh$ or outside $\Omega$;
\item[ii)] $\vit=v$ at all nodes of $\Oh$ not on $\pOh$ and internal to
$\Omega$.
\end{enumerate}

\begin{figure}

\begin{center}
{\tikzset{every picture/.append style={scale=0.5}}
\providecolor{bnd}{RGB}{170,40,40}%
\ifcsname c@dcnode\endcsname\else\newcounter{dcnode}\fi
\begin{tikzpicture}[
    x=1cm, y=1cm,
    boundary/.style={draw=bnd, line width=1.1pt, line cap=round},
    element/.style={draw=black, line width=0.7pt, line join=round},
    lbl/.style={font=\footnotesize, inner sep=1.5pt},
    dof out/.style={fill=white, draw=black, line width=0.6pt},
  ]
  \pgfmathtruncatemacro\dcSide{-1}
  \pgfmathsetmacro\dcL{4.4}     
  \pgfmathsetmacro\dcH{3.3}     
  \pgfmathsetmacro\dcX{1.98}    
  \pgfmathsetmacro\dcS{1.15}    
  \pgfmathsetmacro\dcE{0.4}     
  \pgfmathsetmacro\dcGap{1.3}   
  \pgfmathsetmacro\dcEnd{\dcL+\dcE}
  \tikzset{declare function={bd(\x)=-\dcSide*4*\dcS*\x*(\dcL-\x)/(\dcL*\dcL);}}
  \ifnum\dcSide=1                
    \tikzset{e label/.style={above=2pt}, a1 label/.style={below left=1pt},
             a2 label/.style={below right=1pt}}
  \else
    \tikzset{e label/.style={below=2pt}, a1 label/.style={above left=1pt},
             a2 label/.style={above right=1pt}}
  \fi
  \foreach \k [count=\n from 0] in {2,4} {
    \pgfmathsetmacro\dcShift{\n*(\dcL+\dcGap)}
    \begin{scope}[shift={(\dcShift,0)}]
      \draw[boundary] plot[domain=-\dcE:\dcEnd, samples=120] (\x,{bd(\x)});
      \draw[element] (0,0) -- (\dcL,0) -- (\dcX,-\dcH) -- cycle;
      \setcounter{dcnode}{0}%
      \foreach \l in {0,...,\k} {
        \pgfmathtruncatemacro\jmax{\k-\l}
        \foreach \j in {0,...,\jmax} {
          \pgfmathsetmacro\px{(\j*\dcL+\l*\dcX)/\k}
          \pgfmathsetmacro\py{-\l*\dcH/\k}
          \pgfmathtruncatemacro\out{\py > bd(\px)+0.001}
          \pgfmathtruncatemacro\onE{(\l==0) && (\j>0) && (\j<\k)}
          \ifnum\out=1
            \path[dof out] (\px,\py) circle (1.4pt);
          \else
            \fill (\px,\py) circle (1.4pt);
          \fi
          \pgfmathtruncatemacro\lab{\out || \onE}
          \ifnum\lab=1
            \stepcounter{dcnode}%
            \ifnum\k=2 \def\dcName{M}\else \edef\dcName{M_{\arabic{dcnode}}}\fi
            \ifnum\onE=1
              \path (\px,\py) node[lbl, e label] {$\dcName$};
            \else            
              \path (\px,\py) node[lbl, above=1pt] {$\dcName$};
            \fi
          \fi
        }
      }
      \path (0,0) node[lbl, a1 label] {$a_1$}
            (\dcL,0) node[lbl, a2 label] {$a_2$}
            (\dcX,-\dcH) node[lbl, below=1pt] {$a_3$}
            ({0.13*\dcL+0.7*\dcX},{-0.7*\dcH}) node[lbl] {$T$}
            (\dcX,{-\dcH-0.65}) node[lbl] {$k=\k$};
      \ifnum\n=0
        \path (-\dcE,{bd(-\dcE)}) node[lbl, left=1pt, text=bnd] {$\partial\Omega$};
      \fi
    \end{scope}
  }
\end{tikzpicture}%
}%
%
%
{\tikzset{every picture/.append style={scale=0.5}}
\providecolor{bnd}{RGB}{170,40,40}%
\ifcsname c@dcnode\endcsname\else\newcounter{dcnode}\fi
\begin{tikzpicture}[
    x=1cm, y=1cm,
    boundary/.style={draw=bnd, line width=1.1pt, line cap=round},
    element/.style={draw=black, line width=0.7pt, line join=round},
    lbl/.style={font=\footnotesize, inner sep=1.5pt},
    dof out/.style={fill=white, draw=black, line width=0.6pt},
  ]
  \pgfmathtruncatemacro\dcSide{1}
  \pgfmathsetmacro\dcL{4.4}     
  \pgfmathsetmacro\dcH{3.3}     
  \pgfmathsetmacro\dcX{1.98}    
  \pgfmathsetmacro\dcS{1.15}    
  \pgfmathsetmacro\dcE{0.4}     
  \pgfmathsetmacro\dcGap{1.3}   
  \pgfmathsetmacro\dcEnd{\dcL+\dcE}
  \tikzset{declare function={bd(\x)=-\dcSide*4*\dcS*\x*(\dcL-\x)/(\dcL*\dcL);}}
  \ifnum\dcSide=1                
    \tikzset{e label/.style={above=2pt}, a1 label/.style={below left=1pt},
             a2 label/.style={below right=1pt}}
  \else
    \tikzset{e label/.style={below=2pt}, a1 label/.style={above left=1pt},
             a2 label/.style={above right=1pt}}
  \fi
  \foreach \k [count=\n from 0] in {2,4} {
    \pgfmathsetmacro\dcShift{\n*(\dcL+\dcGap)}
    \begin{scope}[shift={(\dcShift,0)}]
      \draw[boundary] plot[domain=-\dcE:\dcEnd, samples=120] (\x,{bd(\x)});
      \draw[element] (0,0) -- (\dcL,0) -- (\dcX,-\dcH) -- cycle;
      \setcounter{dcnode}{0}%
      \foreach \l in {0,...,\k} {
        \pgfmathtruncatemacro\jmax{\k-\l}
        \foreach \j in {0,...,\jmax} {
          \pgfmathsetmacro\px{(\j*\dcL+\l*\dcX)/\k}
          \pgfmathsetmacro\py{-\l*\dcH/\k}
          \pgfmathtruncatemacro\out{\py > bd(\px)+0.001}
          \pgfmathtruncatemacro\onE{(\l==0) && (\j>0) && (\j<\k)}
          \ifnum\out=1
            \path[dof out] (\px,\py) circle (1.4pt);
          \else
            \fill (\px,\py) circle (1.4pt);
          \fi
          \pgfmathtruncatemacro\lab{\out || \onE}
          \ifnum\lab=1
            \stepcounter{dcnode}%
            \ifnum\k=2 \def\dcName{M}\else \edef\dcName{M_{\arabic{dcnode}}}\fi
            \ifnum\onE=1
              \path (\px,\py) node[lbl, e label] {$\dcName$};
            \else            
              \path (\px,\py) node[lbl, above=1pt] {$\dcName$};
            \fi
          \fi
        }
      }
      \path (0,0) node[lbl, a1 label] {$a_1$}
            (\dcL,0) node[lbl, a2 label] {$a_2$}
            (\dcX,-\dcH) node[lbl, below=1pt] {$a_3$}
            ({0.13*\dcL+0.7*\dcX},{-0.7*\dcH}) node[lbl] {$T$}
            (\dcX,{-\dcH-0.65}) node[lbl] {$k=\k$};
      \ifnum\n=0
        \path (-\dcE,{bd(-\dcE)}) node[lbl, left=1pt, text=bnd] {$\partial\Omega$};
      \fi
    \end{scope}
  }
\end{tikzpicture}%
}
\end{center}

\caption{Boundary elements in two dimensions and the corresponding degrees of
freedom.}
\label{fg:twocasesbis}
\end{figure}

In~\cite{strangfix} the case $d=2$ and $k=2$ is discussed when $\Omega$ is
convex. In our notation, the interpolants $\vi$ and $\vit$ in
that case differ only at the node $M$ in each triangle $T\in\ThB$ which is
always of the form of the left triangle of Figure~\ref{fg:twocasesbis}. For
$\Omega$ convex, the following suboptimal estimate is stated
in~\cite{strangfix}:
\begin{equation}
\|v-\vit\|_{H^1(\OOh)}\le Ch^{3/2}\|v\|_{H^3(\Omega)}.
\end{equation}

The next theorem states this estimate in general together with the
corresponding $L^2$ bound.

\begin{theorem}
Let $v$ be a function in $H^3(\Omega)\cap\Huo$ and $\vit$ the interpolant
introduced above. Then
\begin{equation}
\aligned
&\|v-\vit\|_{L^2(\OOh)}\le Ch^{5/2}\|v\|_{H^3(\Omega)}\\
&|v-\vit|_{H^1(\OOh)}\le Ch^{3/2}\|v\|_{H^3(\Omega)}.
\endaligned
\label{eq:prima}
\end{equation}
for any $k\ge2$ and $d=2,3$.

Moreover, considering the extension operator $\extension$ introduced in
Assumption~\ref{as:ext}, we have
\begin{equation}
\aligned
&\|\extension v-\vit\|_{L^2(\Oh)}\le Ch^{5/2}\|v\|_{H^3(\Omega)}\\
&|\extension v-\vit|_{H^1(\Oh)}\le Ch^{3/2}\|v\|_{H^3(\Omega)}.
\endaligned
\label{eq:pariopp}
\end{equation}
\label{th:bestapprox}
\end{theorem}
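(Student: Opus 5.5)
The plan is to split $\extension v-\vit=(\extension v-\vi)+(\vi-\vit)$ and to estimate the two pieces separately. Since $\extension v=v$ on $\OOh$, the bounds \eqref{eq:prima} follow from \eqref{eq:pariopp} by restriction, so we only prove the latter.

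For the first piece we use \eqref{eq:extinterp} summed over $T\in\Th$ together with the stability \eqref{eq:Stein} of the extension. This gives $\|\extension v-\vi\|_{L^2(\Oh)}\le Ch^3\|v\|_{H^3(\Omega)}$ and $|\extension v-\vi|_{H^1(\Oh)}\le Ch^2\|v\|_{H^3(\Omega)}$, applying \eqref{eq:extinterp} with $k$ replaced by $2$ if $k>2$ (interpolation of degree $k\ge2$ reproduces quadratics). Both are better than the claimed rates, so the whole difficulty is in $w_h:=\vi-\vit\in W_h$.

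By construction $w_h$ vanishes at all nodes except those lying on $\pOh$ or outside $\Omega$. All such nodes belong to elements of $\ThB$; by (M1), for $d=3$ an element touching the boundary only along an edge may also have such nodes, but these still sit within $O(h_T^2)$ of $\pOh$. So $w_h$ is supported in $\OB$, and at each such node $M$ we have $w_h(M)=\extension v(M)$. The key pointwise claim is
\[
|\extension v(M)|\le C h_T^2\,\|\extension v\|_{W^{1,\infty}(\omega_T)},
\]
since $\dist(M,\pO)\le Ch_T^2$ by Lemma~\ref{le:lunette} and Lemma~\ref{le:local}(b), and $\extension v=0$ on $\pO$. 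This is where $H^3\subset W^{1,\infty}$ is used, since $d\le3$.

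The main obstacle is that summing $h_T^2\|\extension v\|_{W^{1,\infty}}$ pointwise does not directly give a good power of $h$ in $L^2$ without exploiting the thin support. So instead of the crude $W^{1,\infty}$ bound, I would keep it local. On $T\in\ThB$ we have $\|w_h\|_{L^2(T)}\le C|T|^{1/2}\max_M|\extension v(M)|\le Ch_T^{d/2}h_T^2\|\Grad\extension v\|_{L^\infty(\omega_T)}$. By the inverse estimate, $|w_h|_{H^1(T)}\le Ch_T^{d/2+1}\|\Grad\extension v\|_{L^\infty}$.

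Summing over the boundary elements, whose number is $O(h^{1-d})$ (consistently with $|\OB|\le Ch$ in Proposition~\ref{pr:phih}), gives
\[
\sum_{T\in\ThB}h_T^{d+2}\le Ch^{3},
\qquad
\sum_{T\in\ThB}h_T^{d+4}\le Ch^5 .
\]
Hence $\|w_h\|_{L^2(\Oh)}\le Ch^{5/2}\|\Grad\extension v\|_{L^\infty}$ and $|w_h|_{H^1(\Oh)}\le Ch^{3/2}\|\Grad\extension v\|_{L^\infty}$. We conclude with $\|\Grad\extension v\|_{L^\infty}\le C\|\extension v\|_{H^3(\RE^d)}\le C\|v\|_{H^3(\Omega)}$ via Sobolev embedding and \eqref{eq:Stein}.

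Two steps deserve care. The first is the count of boundary elements, which for a graded mesh I would handle by writing $\sum_{T\in\ThB}h_T^{d+2}\le h^{3}\sum_{T}h_T^{d-1}$ and using $\sum_{T\in\ThB} h_T^{d-1}\le C|\pOh|\le C$. The second is the distance claim for interior nodes outside $\Omega$ lying at distance $O(h^2)$ when $\Omega$ is nonconvex; this follows from Lemma~\ref{le:local}(b), since such nodes lie in some $\Smt$.
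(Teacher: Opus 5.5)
Your proposal is correct and follows essentially the paper's route. You split off $\extension v-\vi$ by standard interpolation, bound $\vi-\vit$ through its nodal values, which are $O(h^2)\|\Grad\extension v\|_{L^\infty}$ at the nodes on $\pOh$ or outside $\Omega$, exploit that its support is a boundary layer of measure $O(h)$, and conclude with $H^2\subset L^\infty$ and \eqref{eq:Stein}. Your element-wise bookkeeping (node values of size $h_T^2$, the inverse estimate, and $\sum_{T\in\ThB}h_T^{d-1}\le C|\pOh|$) and your quadratic-reproduction remark for $k>2$ are slightly more careful than the paper's global scaling $\bigl|\sum_i|\varphi_i|\bigr|_{H^1(\Oh)}\le Ch^{-1/2}$ and its direct appeal to \eqref{eq:extinterp}. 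Those two steps tacitly assume boundary elements of size comparable to $h$ and $H^{k+1}$ regularity, respectively.
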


\begin{proof}

In Lemma~\ref{le:lunette} we proved that the distance of $\partial\Omega$ from
$\partial\Oh$ decays as $O(h^2)$.
We estimate as usual $\|v-\vit\|$ element by element and remark that the only
difference with respect to the usual interpolation estimate occurs in $\ThB$.
Let us consider a boundary element $T\in\ThB$ and estimate the difference
$\|\vi-\vit\|$ in $T$.
We denote by $\varphi_i$, $i=1,\dots,N$, the $N$ shape functions associated
with the nodes of $T$.
For ease of notation we omit any subscript referring to
$T$. Without loss of generality, we assume that the first $N_T$ shape
functions correspond to all the nodes $M_1,\dots,M_{N_T}$ where $\vi$ and
$\vit$ are different.
For instance, in the two cases of Figure~\ref{fg:twocasesbis} on the left,
$N_T=1$ and the first node $M_1$ is $M$. More general cases are reported on
the right in Figure~\ref{fg:twocasesbis}.
In the two-dimensional case, all $\varphi_i$'s ($i=1,\dots,N_T$) are
supported in $T$ --- indeed, all involved nodes are either internal or on the
boundary of $\Oh$ and not vertices of $T$. In three dimensions we cut off the
$\varphi_i$'s to zero outside $T$ so that we can use the same formalism.

From the regularity of $v$ and the distance from $M_i$ ($i=1,\dots,N_T$) to
the boundary $\partial\Omega$ estimated in Lemma~\ref{le:lunette}, it follows
that
\[
|(\vi-\vit)(M_i)|\le Ch^2\|\Grad\extension v\|_{L^\infty(\Obar)}.
\]
We have
\[
\aligned
\|\vi-\vit\|_{L^2(\Oh)}&=\Big\|\sum_{T\in\ThB}\sum_{i=1}^{N_T}v(M_i)\varphi_i\Big\|_{L^2(\Oh)}\\
&\le
Ch^2\|\Grad\extension
v\|_{L^\infty(\Obar)}\Big\|\sum_{T\in\ThB}\sum_{i=1}^{N_T}|\varphi_i|\Big\|_{L^2(\Oh)}.
\endaligned
\]
Since $|\varphi_i|\le1$ and the support of
$\sum_{T\in\ThB}\sum_{i=1}^{N_T}|\varphi_i|$ is contained in a boundary layer
of measure bounded by $Ch$, we obtain
\begin{equation}
\Big\|\sum_{T\in\ThB}\sum_{i=1}^{N_T}|\varphi_i|\Big\|_{L^2(\Oh)}\le Ch^{1/2}
\label{eq:phiscaling}
\end{equation}
so that we get
\[
\|\vi-\vit\|_{L^2(\Oh)}\le Ch^{5/2}\|\Grad\extension v\|_{L^\infty(\Obar)}.
\]

Similarly, we can obtain the estimate for $|\vi-\vit|_{1,\OOh}$. The only
difference is the analogue of~\eqref{eq:phiscaling} which now reads
\[
\Big\vert\sum_{T\in\ThB}\sum_{i=1}^{N_T}|\varphi_i|\Big\vert_{H^1(\Oh)}\le
Ch^{-1/2}.
\]

Taking into account the continuous embedding of $H^2(\Obar)$ into
$L^\infty(\Obar)$, so that $\|\Grad\extension v\|_{L^\infty(\Obar)}$ is
bounded by $\|\extension v\|_{H^3(\Obar)}$, we obtain
\[
\aligned
&\|\vi-\vit\|_{L^2(\Oh)}\le Ch^{5/2}\|\extension v\|_{H^3(\Obar)}\\
&|\vi-\vit|_{H^1(\Oh)}\le Ch^{3/2}\|\extension v\|_{H^3(\Obar)}.
\endaligned
\]

The final estimate follows by considering the boundedness of the
extension (see Assumption~\ref{as:ext}) and that $\OOh$ is included in
$\Oh$:
\[
\|v-\vit\|_{L^2(\OOh)}\le Ch^{5/2}\|v\|_{H^3(\Omega)}.
\]
Analogously, we get the $H^1$ estimate
\[
|v-\vit|_{H^1(\OOh)}\le Ch^{3/2}\|v\|_{H^3(\Omega)}.
\]

The bound~\eqref{eq:pariopp} follows by the triangle inequality, taking into
account~\eqref{eq:extinterp}. The estimate~\eqref{eq:prima} is just the
restriction of~\eqref{eq:pariopp} to $\OOh$.
\end{proof}

\begin{remark}
The above estimate gives a barrier on the best approximation for higher-order
polynomial approximation, independently of the smoothness of $v$. The cap
comes from the distance between the boundaries of $\Omega$ and $\Oh$.
This barrier is the standard motivation for isoparametric elements, which we
do not pursue here.
\label{re:barrier}
\end{remark}

\section{Error analysis: $H^1$ estimate with Dirichlet boundary conditions}
\label{se:main}

This section studies the error between the exact solution
of~\eqref{eq:poissong} and the numerical solution of~\eqref{eq:poissongvarh}
with homogeneous boundary condition $g=0$.
The first question is how to compute the error between $u\in\Huo$ and $u_h\in
V_h$, since $u$ is defined in $\Omega$ and $u_h$ in $\Oh$. A natural
choice is to use $\extension{u}$ since $\Oh$ may not be contained in $\Omega$.

The following theorem gives a bound for the energy norm of the error.

\begin{theorem}\label{th:main}
Let $u\in H^1_0(\Omega)\cap H^2(\Omega)$ be the solution of~\eqref{eq:poissongvar}
with $g=0$ and $u_h\in V_h$ the solution of~\eqref{eq:poissongvarh}. Then
\begin{equation}\label{eq:strangD}
\|\extension u-u_h\|_{H^1(\Omega_h)}\le
C\inf_{v_h\in V_h}\|\extension u-v_h\|_{H^1(\Omega_h)}
+C\,h^{5/2}\,\|f+\Delta\extension u\|_{L^2(\Omega_h\setminus\Omega)}.
\end{equation}
In particular, if $f=-\Delta\extension u$ in $\Omega_h\setminus\Omega$, the error is bounded
by the best approximation alone.
\end{theorem}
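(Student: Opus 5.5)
The plan is a second Strang lemma argument on $\Oh$. The only non-standard ingredient is the consistency error, and I will control it with Lemma~\ref{le:local}(c). Fix an arbitrary $v_h\in V_h$ and set $w_h=u_h-v_h\in V_h$. Using~\eqref{eq:poissongvarh} with $g=0$ and $f_h=f$,
\[
\|\Grad w_h\|^2_{L^2(\Oh)}=a_h(u_h-v_h,w_h)=a_h(\extension u-v_h,w_h)+\bigl[(f,w_h)_{\Oh}-a_h(\extension u,w_h)\bigr].
\]
The first term is bounded by $\|\Grad(\extension u-v_h)\|_{L^2(\Oh)}\|\Grad w_h\|_{L^2(\Oh)}$. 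Once the bracket is bounded by $Ch^{5/2}\|f+\Delta\extension u\|_{L^2(\OhmO)}\|\Grad w_h\|_{L^2(\Oh)}$, I divide by $\|\Grad w_h\|$. Then I use the triangle inequality $\extension u-u_h=(\extension u-v_h)-w_h$ and take the infimum over $v_h$. To pass from the seminorm to the full $H^1(\Oh)$ norm of $w_h$, I use the Poincaré inequality in $H^1_0(\Oh)$. Its constant depends only on the diameter of $\Oh$, and this is uniformly bounded for $h\le h_0$ by Proposition~\ref{pr:phih}(c).

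For the consistency bracket, note that $\extension u\in H^2(\RE^d)$ by Assumption~\ref{as:ext}, so it is in $H^2(\Oh)$. Since $w_h$ vanishes on $\pOh$, integration by parts on the Lipschitz polyhedron $\Oh$ produces no boundary term:
\[
a_h(\extension u,w_h)=(-\Delta\extension u,w_h)_{\Oh}.
\]
On $\OOh$ we have $\extension u=u$ and $-\Delta u=f$ a.e. Hence the bracket reduces to
\[
(f+\Delta\extension u,w_h)_{\OhmO}\le\|f+\Delta\extension u\|_{L^2(\OhmO)}\|w_h\|_{L^2(\OhmO)}.
\]
This already gives the "in particular" statement: if $f=-\Delta\extension u$ on $\OhmO$, the consistency term vanishes.

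The key step is the bound
\[
\|w_h\|_{L^2(\OhmO)}\le Ch^{5/2}\|\Grad w_h\|_{L^2(\Oh)}\qquad\text{for } w_h\in V_h.
\]
The set $\OhmO$ is the union of the sets $\Smt$, $T\in\ThB$. On each of them, Lemma~\ref{le:local}(c) gives $\|w_h\|_{L^\infty(\Smt)}\le Ch_T^2|T|^{-1/2}\|\Grad w_h\|_{L^2(\patch T)}$, and Lemma~\ref{le:local}(a) gives $|\Smt|\le Ch_T^{d+1}$. Combining these with shape regularity, $|T|\sim h_T^d$, yields
\[
\|w_h\|^2_{L^2(\Smt)}\le Ch_T^{d+1}h_T^4h_T^{-d}\|\Grad w_h\|^2_{L^2(\patch T)}=Ch_T^5\|\Grad w_h\|^2_{L^2(\patch T)}.
\]
I then sum over $T\in\ThB$, using that the patches $\patch T$ overlap only a bounded number of times, which follows from shape regularity.

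I expect this last step to be the only delicate point. It needs the pointwise estimate of Lemma~\ref{le:local}(c), which relies on $w_h$ vanishing on $\pOh$ and on $\Smt$ lying within distance $Ch_T^2$ of $\pOh$. It also needs the finite-overlap argument to be uniform in $h$. Everything else is the standard Strang argument.
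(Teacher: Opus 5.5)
Your proposal is correct and follows essentially the same route as the paper: a Strang-type splitting, then integration by parts so that only $(f+\Delta\extension u,w_h)_{\OhmO}$ survives in the consistency term, then the bound $\|w_h\|^2_{L^2(\Smt)}\le Ch_T^5\|\Grad w_h\|^2_{L^2(\patch T)}$ from Lemma~\ref{le:local}, summed over $\ThB$. Your remarks on the uniform Poincar\'e constant and on the bounded overlap of the patches $\patch T$ make precise two points that the paper leaves implicit in ``coercivity of $a_h$'' and ``summing over $T$''.
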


\begin{proof}
We start with a Strang-like estimate. For $v_h\in V_h$ let $w_h:=v_h-u_h$. The
coercivity of $a_h$ on $H^1_0(\Omega_h)$ and~\eqref{eq:poissongvarh} give
\[
\alpha\|w_h\|^2_{H^1(\Omega_h)}\le a_h(w_h,w_h)=a_h(v_h-\extension u,w_h)+\delta_h(w_h)
\]
with
\[
\delta_h(w_h):=a_h(\extension u,w_h)-(f,w_h)_{\Omega_h}.
\]
Hence
\begin{equation}
\|\extension u-u_h\|_{H^1(\Omega_h)}\le C\inf_{v_h\in V_h}\|\extension u-v_h\|_{H^1(\Omega_h)}
+C\sup_{w_h\in V_h}\frac{|\delta_h(w_h)|}{\|w_h\|_{H^1(\Omega_h)}}.
\label{eq:i}
\end{equation}

The consistency term can be bounded as follows. Since $\extension u$ is in
$H^2(\RE^d)$ and $w_h\in H^1_0(\Omega_h)$, we can integrate by parts on
$\Omega_h$; moreover $\Delta\extension u=\Delta u=-f$ in $\Omega$. Therefore
\begin{equation}
\delta_h(w_h)=-\int_{\Omega_h}(\Delta\extension u+f)\,w_h\,\dx
=-\int_{\Omega_h\setminus\Omega}(\Delta\extension u+f)\,w_h\,\dx.
\label{eq:ii}
\end{equation}

We have $\Omega_h\setminus\Omega=\bigcup_{T\in\ThB}\Smt$.
Since $w_h=0$ on $\partial\Omega_h$, every point of $\Smt$ lies within
distance $O(h_T^2)$ of $\pOh$, and $|\Smt|\le Ch_T^{d+1}$,
Lemma~\ref{le:local} gives
\[
\aligned
\|w_h\|^2_{L^2(\Smt)}&\le|\Smt|\|w_h\|^2_{L^\infty(\Smt)}
\le Ch_T^{d+1}h_T^4|T|^{-1}\|\Grad w_h\|^2_{L^2(\patch T)}\\
&\le Ch_T^5\|\Grad w_h\|^2_{L^2(\patch T)},
\endaligned
\]
and summing over $T\in\mathcal{T}_h^B$,
\[
\|w_h\|_{L^2(\Omega_h\setminus\Omega)}\le Ch^{5/2}\|\Grad w_h\|_{L^2(\Omega_h)} .
\]
The Cauchy--Schwarz inequality in~\eqref{eq:ii}, together with~\eqref{eq:i},
concludes the proof.
\end{proof}

Putting together the results proved so far, we conclude this section by
stating the main result concerning the error estimate.

\begin{theorem}\label{th:Dfinal}
Let $k\ge1$, $t=\min(k,2)$, $s=\min(k,3/2)$, and let
$u\in H^1_0(\Omega)\cap H^{t+1}(\Omega)$ be the solution of \eqref{eq:poissongvar}
with $g=0$ and $u_h\in V_h$ the discrete solution of \eqref{eq:poissongvarh}
obtained using finite elements of polynomial degree $k$. Then
\[
\|u-u_h\|_{H^1(\Omega\cap\Omega_h)}\le\|\extension u-u_h\|_{H^1(\Omega_h)}
\le C\big(h^{s}\|u\|_{H^{t+1}(\Omega)}+h^{5/2}\|f\|_{L^2(\Omega_h\setminus\Omega)}\big).
\]
\end{theorem}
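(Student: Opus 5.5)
The first inequality is immediate: on $\Omega\cap\Omega_h$ we have $\extension u=u$, and $\Omega\cap\Omega_h\subset\Omega_h$. For the second inequality I would start from Theorem~\ref{th:main}, which reduces the error to a best approximation term plus the consistency term $C h^{5/2}\|f+\Delta\extension u\|_{L^2(\Omega_h\setminus\Omega)}$. I would split the consistency term by the triangle inequality into $\|f\|_{L^2(\Omega_h\setminus\Omega)}$ and $\|\Delta\extension u\|_{L^2(\Omega_h\setminus\Omega)}$. The second piece is bounded by $\|\extension u\|_{H^2(\RE^d)}\le C\|u\|_{H^2(\Omega)}\le C\|u\|_{H^{t+1}(\Omega)}$ using Assumption~\ref{as:ext}. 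Since $h^{5/2}\le Ch^{s}$ for $h\le h_0$, this contribution is absorbed into the first term on the right-hand side.

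It then remains to bound $\inf_{v_h\in V_h}\|\extension u-v_h\|_{H^1(\Omega_h)}$ by $Ch^s\|u\|_{H^{t+1}(\Omega)}$, and I would distinguish two cases.

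\emph{Case $k\ge2$.} Here $t=2$ and $s=3/2$. I take $v_h=\tilde u^I$, the modified interpolant of Section~\ref{se:bestapprox}. Estimate~\eqref{eq:pariopp} of Theorem~\ref{th:bestapprox} gives both the $H^1$ seminorm bound $Ch^{3/2}\|u\|_{H^3(\Omega)}$ and the $L^2$ bound $Ch^{5/2}\|u\|_{H^3(\Omega)}$, hence the full $H^1$ norm bound. Since $t+1=3$, this is exactly the required estimate. One subtlety is that Theorem~\ref{th:bestapprox} is stated for $k\ge2$ with $u\in H^3$, so degree $k$ larger than $2$ is covered without change.

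\emph{Case $k=1$.} Here $t=s=1$ and $u\in H^2$. All nodes are vertices, and every vertex lies in $\cl\Omega_h$, with boundary vertices on $\partial\Omega$. The Lagrange interpolant of $\extension u$ therefore already vanishes at the boundary nodes of $\pOh$, because $u=0$ on $\partial\Omega$, so it belongs to $V_h$. The interior vertices may lie outside $\Omega$, but there we simply interpolate $\extension u$, which is well defined by Sobolev embedding since $H^2\subset C^0$ for $d\le3$. The standard estimate~\eqref{eq:extinterp} with $k=1$ then gives $Ch\|\extension u\|_{H^2}\le Ch\|u\|_{H^2(\Omega)}$.

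The only point needing care, and the main potential obstacle, is the $k=1$ case. I must verify that the interpolant of $\extension u$ really lies in $V_h$, that is, that it vanishes on all of $\pOh$. It does: on each boundary facet the interpolant is the linear function matching the zero vertex values, so it is identically zero there. Everything else is bookkeeping of the exponents $h^{5/2}\le h^s$ and of the norms via Assumption~\ref{as:ext}.
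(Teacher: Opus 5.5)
Your proof is correct and follows essentially the same route as the paper. You take the first inequality from $\extension u=u$ on $\Omega$, then apply Theorem~\ref{th:main}, splitting the consistency term via \eqref{eq:Stein} and absorbing it with $h^{5/2}\le Ch^s$. You bound the best approximation by $\tilde u^I$ with \eqref{eq:pariopp} for $k\ge2$, and by the nodal interpolant of $\extension u$ for $k=1$, which lies in $V_h$ because the boundary vertices sit on $\partial\Omega$.
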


\begin{proof}
The first inequality holds since $\extension u=u$ in $\Omega$. By
\eqref{eq:Stein}, we get
$\|f+\Delta\extension u\|_{L^2(\Omega_h\setminus\Omega)}\le\|f\|_{L^2(\Omega_h\setminus\Omega)}
+C\|u\|_{H^2(\Omega)}$. For the best approximation term in \eqref{eq:strangD},
if $k\ge2$ we take $v_h=\tilde u^I$ and use \eqref{eq:pariopp}; if $k=1$ the
nodal interpolant of $\extension u$ belongs to $V_h$, because the boundary vertices lie on
$\partial\Omega$ where $\extension u=0$, and gives $Ch\|u\|_{H^2(\Omega)}$.
\end{proof}

\begin{remark}
The above bound gives a barrier on the error estimates in the energy norm
for higher-order polynomial approximation, independently of the smoothness of
$u$. The cap is aligned with the analogous result for the best approximation
stated in Remark~\ref{re:barrier}.
\label{re:barrier2}
\end{remark}

\section{Error estimate in $L^2$ with Dirichlet boundary conditions}
\label{se:DL2}

In this section we state the duality argument leading to the generalization of
the Aubin--Nitsche technique to our setting.

For simplicity, we assume that the domain $\Omega$ is strongly Lipschitz, so
that $\Oh$ is a Lipschitz polytope with uniformly bounded Lipschitz constant.

As usual, we need some regularity of the adjoint problem and we state in the
next lemma a useful result. Although we could not find it in this form in the
literature, it follows from classical results in harmonic analysis,
see~\cite{JerisonKenig1981,Verchota1984,Kenig1994,AmroucheMoussaoui2025,
JerisonKenig1995}.

\begin{lemma}\label{le:dualreg}
For $r\in L^2(\Oh)$ let $w(h)\in H^1_0(\Oh)$ be the solution of
\[
a_h(v,w(h))=(r,v)_{\Oh}\qquad\forall v\in H^1_0(\Oh).
\]
Then, with $C$ depending only on the Lipschitz constant of $\Omega$, we have
\begin{equation}\label{eq:apriori}
\|\Grad w(h)\|_{L^2(\OBpatch)}\le C\delta^{1/2}\|r\|_{L^2(\Oh)},
\end{equation}
where the definition of the layer $\OBpatch$ $(0<\delta\le\diam\Oh)$ was given
before Assumption~\ref{as:lunette}.

We also have $w(h)\in H^{3/2}(\Oh)$ with
$\|w(h)\|_{H^{3/2}(\Oh)}\le C\|r\|_{L^2(\Oh)}$.
\end{lemma}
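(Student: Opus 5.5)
We prove the two claims of Lemma~\ref{le:dualreg} in reverse order: first the global $H^{3/2}$ regularity, then the layer estimate~\eqref{eq:apriori}, which we deduce from it.

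\emph{Step 1: global regularity.} Since $\Omega$ is strongly Lipschitz and the mesh satisfies Assumption~\ref{as:lunette}, $\Oh$ is a Lipschitz polytope. Its Lipschitz character (the number and size of coordinate cylinders and the Lipschitz constants of the graphs) is bounded uniformly in $h\le h_0$. To see this, use the bi-Lipschitz map $\Phi_h$ of Proposition~\ref{pr:phih}, with $\|\Jac\Phi_h-\matId\|_{L^\infty}\le Ch$. It transports the local graph representations of $\partial\Omega$ onto $\partial\Oh$ with a perturbation of the Lipschitz constant of order $h$. We then write $w(h)=w_0+w_1$:
\begin{itemize}
\item $w_0=\Gamma*(\chi_{\Oh}r)$ is the Newtonian potential, which lies in $H^2(\RE^d)$ with $\|w_0\|_{H^2}\le C\|r\|_{L^2}$;
\item $w_1$ is harmonic in $\Oh$ with boundary data $-w_0|_{\pOh}\in H^1(\pOh)$.
\end{itemize}
The Jerison--Kenig/Verchota theory for the Dirichlet problem with $H^1$ data on Lipschitz domains gives $\|N(\Grad w_1)\|_{L^2(\pOh)}\le C\|w_0\|_{H^1(\pOh)}$, where $N$ is the nontangential maximal function. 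This in turn gives $w_1\in H^{3/2}(\Oh)$ with the same bound. All constants depend only on the Lipschitz character, hence are uniform in $h$. This yields the second claim.

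\emph{Step 2: layer estimate.} We would argue via the nontangential maximal function rather than interpolation. Points of $\OBpatch$ are covered by nontangential cones issuing from $\pOh$ truncated at height $C\delta$; this covering with bounded overlap again uses the uniform Lipschitz character. Using a coarea or Fubini argument in local coordinates, this covering gives
\[
\|\Grad w_1\|^2_{L^2(\OBpatch)}\le C\delta\,\|N(\Grad w_1)\|^2_{L^2(\pOh)}\le C\delta\|r\|^2_{L^2(\Oh)}.
\]
For $w_0$ we use $\Grad w_0\in H^1(\RE^d)$ and the trace inequality on parallel surfaces, $\int_{\OBpatch}|\Grad w_0|^2\le C\delta\|\Grad w_0\|^2_{H^1}$, which holds in any uniformly Lipschitz collar. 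Combining the two bounds gives~\eqref{eq:apriori}.

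As an alternative, if only the $H^{3/2}$ bound were available, one could use the Hardy-type inequality $\|\nabla w\|_{L^2(\OBpatch)}\le C\delta^{1/2}\|w\|_{H^{3/2}}$. This is false at the endpoint in general (there is a logarithmic loss), which is why we route the argument through $N(\Grad w)$ instead.

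\emph{Main obstacle.} The hardest point is the uniformity in $h$. Every harmonic-analysis constant must depend only on the Lipschitz character of $\Oh$, so we must make sure that the polytopes $\Oh$ admit a common atlas inherited from $\Omega$. We would establish this first, using the Lipschitz continuity of $\Phi_h$ and $\Phi_h^{-1}$ together with the estimate $\|\Phi_h-\Id\|_{L^\infty(\Oh)}\le Ch^2$ from Proposition~\ref{pr:phih}. Near corners and edges of $\Sigma$, Assumption~\ref{as:lunette}(M1) and (M4) ensure that the discrete skeleton follows $\Sigma$. This rules out local degeneration of the graph representation.
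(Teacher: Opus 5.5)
Your proposal is correct and matches what the paper intends. The paper gives no proof beyond pointing to the Jerison--Kenig/Verchota harmonic-analysis literature, and your argument is exactly how those results yield both claims: the splitting $w(h)=w_0+w_1$, the $L^2$ regularity-problem bound on $N(\Grad w_1)$ followed by the passage to $H^{3/2}$, and the Fubini argument along transversal segments in the layer. Your remark that $H^{3/2}$ alone would give the layer bound only up to a logarithmic factor is also correct.

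The only slip is that the Newtonian potential is not in $H^2(\RE^d)$ for $d=2,3$. State instead $\|w_0\|_{H^2(B)}\le C\|r\|_{L^2(\Oh)}$ on a fixed ball $B$ containing every $\Oh$, which is all you actually use.
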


\begin{theorem}
Let $u\in H^3(\Omega)$ be the solution of~\eqref{eq:poissongvar} for $g=0$ and
$u_h\in V_h$ the corresponding discrete solution of~\eqref{eq:poissongvarh}.
Then, for $k\ge2$,
\[
\|\extension u-u_h\|_{L^2(\Oh)}\le Ch^2(\|u\|_{H^3(\Omega)}+\|f\|_{L^2(\OhmO)}).
\]
\label{th:DL2}
\end{theorem}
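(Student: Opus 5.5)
We argue by duality on the discrete domain $\Oh$, using Lemma~\ref{le:dualreg} with $r=e:=\extension u-u_h\in H^1(\Oh)$. The difficulty is that $e\notin H^1_0(\Oh)$, because $\extension u\neq0$ on $\pOh$; its trace there is $O(h^2)$ by Lemma~\ref{le:lunette}. So we split
\[
e=(\extension u-\vit)+(\vit-u_h)=:\eta+\xi,
\]
where $\vit$ is the modified interpolant of Theorem~\ref{th:bestapprox} with $v=u$. Then $\xi\in V_h\subset H^1_0(\Oh)$, and \eqref{eq:pariopp} already gives $\|\eta\|_{L^2(\Oh)}\le Ch^{5/2}\|u\|_{H^3(\Omega)}$. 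It therefore remains to show $\|\xi\|_{L^2(\Oh)}\le Ch^2(\|u\|_{H^3(\Omega)}+\|f\|_{L^2(\OhmO)})$.

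We take $w=w(h)$ with $r=\xi$, so that $\|\xi\|^2=a_h(\xi,w)$. Let $w_h\in V_h$ be an approximation of $w$, for instance a Scott--Zhang type interpolant with zero boundary values. Galerkin orthogonality up to consistency reads
\[
a_h(e,w_h)=\delta_h(w_h)=-(\Delta\extension u+f,w_h)_{\OhmO},
\]
as in the proof of Theorem~\ref{th:main}. Writing $\xi=e-\eta$ gives
\[
\|\xi\|^2=a_h(e,w-w_h)+\delta_h(w_h)-a_h(\eta,w).
\]

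The three terms are handled as follows.
\begin{enumerate}
\item[(i)] By Lemma~\ref{le:dualreg}, $w\in H^{3/2}(\Oh)$, so $\|\Grad(w-w_h)\|\le Ch^{1/2}\|\xi\|$. Combining this with the $H^1$ bound $\|e\|_{H^1}\le Ch^{3/2}(\dots)$ of Theorem~\ref{th:Dfinal} gives $Ch^2(\dots)\|\xi\|$.
\item[(ii)] By Lemma~\ref{le:local}(c) exactly as in Theorem~\ref{th:main}, $|\delta_h(w_h)|\le Ch^{5/2}\|\Delta\extension u+f\|_{L^2(\OhmO)}\|\Grad w_h\|$, which is harmless.
\item[(iii)] The term $a_h(\eta,w)$ is the main obstacle. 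The naive bound $|\eta|_{H^1}\|\Grad w\|\sim h^{3/2}$ is not enough. The $\vi$ part is fine: integrating by parts, $a_h(\extension u-\vi,w)=-(\extension u-\vi,\Delta w)+\dots$ is not available with $H^{3/2}$ only, so instead we use that $\extension u-\vi$ has optimal $H^1$ error $h^k|u|_{H^{k+1}}$. Since only $H^3$ is assumed, this gives at least $h^2$ for $k\ge2$, and the piece is bounded by $Ch^2\|u\|_{H^3}\|\xi\|$ directly.
\end{enumerate}

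The delicate piece is $a_h(\vi-\vit,w)$. The function $\vi-\vit$ is supported on $\OB$ and, by the proof of Theorem~\ref{th:bestapprox}, has gradient of size $O(h)$ pointwise in $L^\infty$. Its support lies in a layer $\OBpatch$ with $\delta=Ch$. Applying \eqref{eq:apriori} with $\delta\sim h$,
\[
|a_h(\vi-\vit,w)|\le\|\Grad(\vi-\vit)\|_{L^2(\OB)}\|\Grad w\|_{L^2(\OBpatch)}\le Ch^{3/2}\|u\|_{H^3}\cdot Ch^{1/2}\|\xi\|.
\]
This is the key use of the boundary-layer estimate; without \eqref{eq:apriori} one would lose $h^{1/2}$. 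Dividing by $\|\xi\|$ and adding the bound for $\eta$ concludes the proof.

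We should also double-check that the $H^1$ bound of term (i) may be taken on all of $\Oh$ with the extension. This holds by Theorem~\ref{th:Dfinal}, since $\|f+\Delta\extension u\|\le\|f\|_{L^2(\OhmO)}+C\|u\|_{H^2}$.
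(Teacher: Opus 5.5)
Your proof is correct, and it takes a genuinely different route from the paper. The paper's duality argument is built on the pulled-back solution $\lenoir{u}=u\circ\Phi_h\in H^1_0(\Oh)$. It takes $r=\lenoir{u}-u_h$ in the dual problem and computes the defect of Galerkin orthogonality by changing variables through $\Phi_h$. This produces two remainders:
\begin{itemize}
\item a source-mismatch remainder $\resto_1$, estimated on the sets $\Spt$, $\Smt$ and through $\|\Phi_h^{-1}-\Id\|_{L^\infty}\le Ch^2$;
\item a metric remainder $\resto_2$, driven by $\Jac\Phi_h\Jac\Phi_h^\top/|\det\Jac\Phi_h|-\matId=O(h)$ on $\OB$. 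It is bounded by $Ch^{3/2}\|u\|_{H^3(\Omega)}\|\Grad w(h)\|_{L^2(\OBpatch)}$ and closed with \eqref{eq:apriori} for $\delta\sim h$.
\end{itemize}
The paper then finishes with $\|\extension u-\lenoir{u}\|_{L^2(\Oh)}\le Ch^{5/2}\|u\|_{H^3(\Omega)}$.

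You instead compare with the discrete function $\tilde u^I\in V_h$. The only consistency defect is then the one already controlled in Theorem~\ref{th:main}, localized on $\OhmO$ and of order $h^{5/2}$. The nonconformity moves into $a_h(\eta,w)$, where the boundary correction $u^I-\tilde u^I$ plays exactly the role of $\resto_2$: a gradient of size $O(h)$ on a layer of measure $O(h)$, paired with the factor $h^{1/2}$ from \eqref{eq:apriori}.

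Your route is more economical. It avoids $\Phi_h$ and Proposition~\ref{pr:phih} altogether, relying only on Lemma~\ref{le:local}(c), Theorems~\ref{th:bestapprox} and~\ref{th:Dfinal}, and Lemma~\ref{le:dualreg}. It even gives a better data dependence, $h^{5/2}\|f\|_{L^2(\OhmO)}$.

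The paper's route is more reusable. The splitting $\resto_1+\resto_2$ does not rely on the boundary condition being essential. It therefore carries over to the Neumann problem (Theorems~\ref{th:Nfinal} and~\ref{th:NL2}) and to nonhomogeneous data through $g\circ\Phi_h$. In those settings no discrete function vanishing on $\pOh$ can absorb the mismatch, and the consistency term is not confined to $\OhmO$.

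Two cosmetic points. Your opening sentence sets $r=e$, while the argument (correctly) uses $r=\xi$. The sentence on the $u^I$ part of (iii) is garbled, although its conclusion $|a_h(\extension u-u^I,w)|\le Ch^2\|u\|_{H^3(\Omega)}\|\xi\|_{L^2(\Oh)}$ is right.
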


\begin{proof}

In order to estimate $\|\extension u-u_h\|$ we add and subtract $\lenoir{u}$.
We consider the auxiliary problem: find $w(h)\in\Huoh$ such that
\[
a_h(v,w(h))=(\lenoir{u}-u_h,v)_\Oh\quad\forall v\in\Huoh.
\]
From Lemma~\ref{le:dualreg}, we have the a priori estimate
\[
\|w(h)\|_{H^{3/2}(\Oh)}\le C\|\lenoir{u}-u_h\|_{L^2(\Oh)}.
\]

A crucial ingredient of any duality argument is the Galerkin orthogonality
that in this setting holds up to an error arising from the geometry
perturbation. More precisely, for any $v_h\in V_h$, we have
\[
a_h(\lenoir{u}-u_h,v_h)=a_h(\lenoir{u},v_h)-(f,v_h)_\Oh.
\]
We analyze the first term; in the integrals we denote by
$\dxO$ and $\dxOh$ the volume elements, taking into account the change of
variable $\bfx_\Omega=\Phi_h(\bfx_\Oh)$. Hence
\[
\aligned
&\int_\Oh\Grad\lenoir{u}\cdot\Grad v_h\,\dxOh\\
&\quad =\int_\Omega(\Jac\Phi_h^\top\Grad u)\cdot(\Jac\Phi_h^\top\Grad
(v_h\circ\Phi_h^{-1}))|\det\Jac\Phi_h|^{-1}\,\dxO\\
&\quad =\int_\Omega\Grad u\cdot\Grad(v_h\circ\Phi_h^{-1})\,\dxO\\
&\qquad+\int_\Omega(\Grad u)^\top\left(\frac{\Jac\Phi_h(\Jac\Phi_h)^\top}
{|\det\Jac\Phi_h|}-\matId\right)\Grad(v_h\circ\Phi_h^{-1})\,\dxO\\
&\quad=(f,v_h\circ\Phi_h^{-1})_\Omega
+\int_\Omega(\Grad u)^\top\left(\frac{\Jac\Phi_h(\Jac\Phi_h)^\top}
{|\det\Jac\Phi_h|}-\matId\right)\Grad(v_h\circ\Phi_h^{-1})\,\dxO.
\endaligned
\]

We denote by $w(h)^I$ a quasi-interpolant of $w(h)$ --- for instance we can
take the Scott--Zhang construction~\cite{ScottZhang} --- so that we get

\begin{equation}
\aligned
\|\lenoir{u}-u_h\|_{L^2(\Oh)}^2&=a_h(\lenoir{u}-u_h,w(h))\\
&=a_h(\lenoir{u}-u_h,w(h)-w(h)^I)+\resto(h)\\
&\le C\|\lenoir{u}-u_h\|_{H^1(\Oh)}h^{1/2}\|w(h)\|_{H^{3/2}(\Oh)}
+|\resto(h)|,
\endaligned
\label{eq:AN}
\end{equation}
where
\begin{equation}
\aligned
\resto(h)&=(f,w(h)^I\circ\Phi_h^{-1})_\Omega-(f,w(h)^I)_\Oh\\
&\quad+\int_\Omega(\Grad u)^\top\left(\frac{\Jac\Phi_h(\Jac\Phi_h)^\top}
{|\det\Jac\Phi_h|}-\matId\right)\Grad(w(h)^I\circ\Phi_h^{-1})\,\dxO\\
&=\resto_1(h)+\resto_2(h).
\endaligned
\label{eq:r1r2}
\end{equation}

We estimate the two remainders. We have
\[
\resto_1(h)=\sum_{T\in\ThB}\left(\int_\Tt fw(h)^I\circ\Phi_h^{-1}\,\dxO
-\int_T fw(h)^I\,\dxO\right).
\]
We recall that $\OB$ is the region occupied by all the $T$ in $\ThB$:
\begin{align*}
\resto_1(h)&=\sum_{T\in\ThB}\bigg(\int_T f(w(h)^I\circ\Phi_h^{-1}-w(h)^I)\,\dxO\\
&\quad+\int_{\Spt} fw(h)^I\circ\Phi_h^{-1}\,\dxO
-\int_{\Smt} fw(h)^I\circ\Phi_h^{-1}\,\dxO\bigg)\\
&\le\|f\|_{L^2(\OB)}\left(\sum_{T\in\ThB}\|w(h)^I\|^2_{L^2(T)}
\|\Phi_h^{-1}-\mathbf{I}\|^2_{L^\infty(T)}\right)^{1/2}\\
&\quad+\Big|\sum_{T\in\ThB}\int_{\Spt}
fw(h)^I\circ\Phi_h^{-1}\,\dxO\Big|
+\Big|\sum_{T\in\ThB}\int_{\Smt}
fw(h)^I\circ\Phi_h^{-1}\,\dxO\Big|\\
&\le Ch^2\|f\|_{L^2(\OB)}\|w(h)^I\|_{L^2(\OB)}\\
&\quad+\Big|\sum_{T\in\ThB}\int_{\Spt}
fw(h)^I\circ\Phi_h^{-1}\,\dxO\Big|
+\Big|\sum_{T\in\ThB}\int_{\Smt}
fw(h)^I\circ\Phi_h^{-1}\,\dxO\Big|
\end{align*}

The first of the last two terms can be estimated as
\[
\aligned
&\Big|\sum_{T\in\ThB}\int_{\Spt}fw(h)^I\circ\Phi_h^{-1}\,\dxO\Big|
\le\Big|\sum_{T\in\ThB}\int_{\Phi_h^{-1}(\Spt)}
\lenoir{f}w(h)^I|\det\Jac\Phi_h|\,\dxOh\Big|\\
&\quad\le C\|f\|_{L^2(\OmOh)}
\left(\sum_{T\in\ThB}|\Phi_h^{-1}(\Spt)|
\|w(h)^I\|^2_{L^\infty(\Phi_h^{-1}(\Spt))}\right)^{1/2}\\
&\quad\le C\|f\|_{L^2(\OmOh)}
\left(\sum_{T\in\ThB}|\Phi_h^{-1}(\Spt)|
h^4\|\Grad w(h)^I\|^2_{L^\infty(\Phi_h^{-1}(\Spt))}\right)^{1/2}\\
&\quad\le C\|f\|_{L^2(\OmOh)}
\left(\sum_{T\in\ThB}
h^4\|\Grad w(h)^I\|^2_{L^2(\Phi_h^{-1}(\Spt))}\right)^{1/2}\\
&\quad\le Ch^2\|f\|_{L^2(\OmOh)}\|\Grad w(h)^I\|_{L^2(\OB)}
\le Ch^2\|f\|_{L^2(\OmOh)}\|w(h)\|_{H^1(\OB)},
\endaligned
\]
where we used~\eqref{eq:lenoir} and that $\|w(h)^I\|_{L^\infty}$ is bounded by
$h^2\|\Grad w(h)^I\|_{L^\infty}$ because of the boundary conditions and the
inverse estimate
$|\omega|\|w_h\|^2_{L^\infty(\omega)}\le C\|w_h\|^2_{L^2(\omega)}$ for all
$w_h\in V_h$.

The term involving the integrals over $\Smt$ can be treated with
similar arguments, using the transformation $\Phi_h(\Smt)$,
thus obtaining
\[
\Big|\sum_{T\in\ThB}\int_{\Smt}fw(h)^I\circ\Phi_h^{-1}\,\dxO\Big|\\
\le Ch^2\|f\|_{L^2(\OhmO)}\|w(h)\|_{H^1(\OB)}.
\]

We then have
\[
|\resto_1(h)|\le Ch^2\|f\|_{L^2(\Obar)}\|\lenoir{u}-u_h\|_{L^2(\Oh)}.
\]

For the second term in~\eqref{eq:r1r2}, using the results of
Section~\ref{se:geom}, in particular the bounds in~\eqref{eq:cov} and
Lemma~\ref{le:local}, the stability of the Scott--Zhang quasi-interpolant,
and~\eqref{eq:apriori}, we have the following estimate for $\delta\le Ch$:
\[
\aligned
|\resto_2(h)|&\le Ch\|\Grad u\|_{L^2(\Phi_h(\OB))}
\|\Grad(w(h)^I\circ\Phi_h^{-1})\|_{L^2(\Phi_h(\OB))}\\
&\le Ch|\Phi_h(\OB)|^{1/2}\,\|\Grad u\|_{L^\infty(\Omega)}
\|\Grad w(h)^I\|_{L^2(\OB)}\\
&\le Ch^{3/2}\|u\|_{H^3(\Omega)}\|\Grad w(h)^I\|_{L^2(\OB)}\\
&\le Ch^{3/2}\|u\|_{H^3(\Omega)}\|\Grad w(h)\|_{L^2(\OBpatch)}\\
&\le Ch^2\|u\|_{H^3(\Omega)}\|\lenoir{u}-u_h\|_{L^2(\Oh)}.
\endaligned
\]

We conclude the bound for $\|\lenoir{u}-u_h\|$ by finally
estimating~\eqref{eq:AN}. We use the bounds of $\lenoir{u}-u_h$ and
$w(h)-w(h)^I$ in the seminorm of $H^1(\Oh)$ together with the one for
$\resto(h)$, taking into account that $\|f\|_{L^2(\Obar)}\le
C(\|u\|_{H^3(\Omega)}+\|f\|_{L^2(\OhmO)})$, so that we get
\[
\aligned
\|\lenoir{u}-u_h\|_{L^2(\Oh)}&\le Ch^{1/2}\|\lenoir{u}-u_h\|_{H^1(\Oh)}+
Ch^2(\|u\|_{H^3(\Omega)}+\|f\|_{L^2(\OhmO)})\\
&\le Ch^2(\|u\|_{H^3(\Omega)}+\|f\|_{L^2(\OhmO)}).
\endaligned
\]

To conclude the proof we estimate $\|\extension u-\lenoir{u}\|$. Since
$\extension u$ and $\lenoir{u}$ differ only on $\OB$ we take $y\in\OB$
and get
\[
\aligned
|\extension u(y)-\lenoir{u}(y)|&=
|\extension u(y)-\extension u(\Phi_h(y))|\\
&\le\|\Grad\extension u\|_{L^\infty(\R^d)}\,|y-\Phi_h(y)|
\le Ch^2\|u\|_{H^3(\Omega)}.
\endaligned
\]
Integrating over $\OB$ and taking into account that its measure is $O(h)$,
we conclude the proof with the missing bound
\[
\|\extension u-\lenoir{u}\|_{L^2(\Oh)}\le Ch^{5/2}\|u\|_{H^3(\Omega)}.
\]

\end{proof}

\begin{remark}
The estimate of the previous theorem gives a barrier of $O(h^2)$ on the rate
of convergence in $L^2$ of higher-order approximation, independently of the
smoothness of the solution of the original and the dual problems. The cap is
worse than for the best approximation estimate by $O(h^{1/2})$.
\label{re:barrier3}
\end{remark}

\section{Error analysis in the case of Neumann boundary conditions}
\label{se:neumann}

In this section we sketch what is different with respect to the previous
analysis in the case of Neumann boundary conditions.

We are dealing with the following problem: find $u$ such that
\begin{equation}
\aligned
&-\Delta u+u=f&&\text{in }\Omega\\
&\Grad u\cdot\n=g_N&&\text{on }\partial\Omega.
\endaligned
\label{eq:poissonneumann}
\end{equation}
We add the reaction term in order to avoid compatibility conditions. With
$V=\Hu$ a standard variational formulation reads: find $u\in V$ such that
\begin{equation}
b(u,v)=(f,v)_\Omega+\dual{g_N}{v}\quad\forall v\in V,
\label{eq:poissonneumannvar}
\end{equation}
where
\[
b(u,v)=(\Grad u,\Grad v)_\Omega+(u,v)_\Omega
\]
and $\dual{\cdot}{\cdot}$ denotes the duality pairing between
$H^{1/2}(\partial\Omega)$ and its dual $H^{-1/2}(\partial\Omega)$. As for the
Dirichlet case, we assume suitable regularity of our data so that the
solution $u$ belongs to $H^3(\Omega)$ and the duality pairing can be written
as an integral on the boundary
\[
\dual{g_N}{v}=\int_{\partial\Omega}g_Nv\,ds.
\]

Without repeating the discussion and the definitions detailed in
Section~\ref{se:setting}, given a finite element space $V_h$ conforming in
$H^1(\Oh)$, the discretization of~\eqref{eq:poissonneumannvar} reads: find
$u_h\in V_h$ such that
\begin{equation}
b_h(u,v)=(f_h,v)_\Oh+\int_{\pOh}g_{N,h}v\,ds\quad\forall v\in V_h,
\label{eq:poissonneumannvarh}
\end{equation}
where $b_h(u,v)$ has the same expression as $b(u,v)$, on $\Oh$, and where
a suitable approximation $g_{N,h}$ of $g_N$ should be considered.

Since the aim of this section is to highlight the main differences between
Dirichlet and Neumann boundary conditions, we start by considering the
homogeneous case $g_N=0$ with the natural choice $g_{N,h}=0$. As in the
previous case, we consider $f_h=f$.

The first important difference from the Dirichlet case is that, since the
boundary conditions are not imposed in the spaces $V$ and $V_h$, the best
approximation error $\inf_{v\in V_h}\|\extension{u}-v\|$ is always optimal
with respect to the $H^1$ and $L^2$ norms.

The energy norm estimate of the error can be obtained by using arguments
similar to those of Strang's theorem, which give the following result.

\begin{lemma}
Let $u\in V$ be the solution of~\eqref{eq:poissonneumannvar} with $g_N=0$ and
$u_h\in V_h$ the corresponding solution of~\eqref{eq:poissonneumannvarh}. Then
\begin{equation}
\|\extension{u}-u_h\|_{\Huh}\le C\inf_{v_h\in V_h}\|\extension{u}-v_h\|_{\Huh}
+\sup_{w_h\in V_h}\frac{|\delta_h(w_h)|}{\|w_h\|_{\Huh}}
\label{eq:ceaN}
\end{equation}
with
\[
\delta_h(w_h)=b_h(\extension{u},w_h)-(f,w_h)_{\Oh}.
\]
\label{le:ceaN}
\end{lemma}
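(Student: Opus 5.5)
The argument is the same Strang-type reasoning used in the proof of Theorem~\ref{th:main}. The only differences are that the bilinear form is now $b_h$ on $\Huh$ and that the discrete space carries no boundary conditions.

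First, coercivity. For any $w\in\Huh$ we have $b_h(w,w)=\|\Grad w\|^2_{L^2(\Oh)}+\|w\|^2_{L^2(\Oh)}=\|w\|^2_{\Huh}$. So $b_h$ is coercive on $V_h$ with constant $1$, uniformly in $h$, and no Poincar\'e inequality on $\Oh$ is needed. Continuity also holds with constant $1$.

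Second, the error splitting. Fix an arbitrary $v_h\in V_h$ and set $w_h:=v_h-u_h\in V_h$. Since $g_N=g_{N,h}=0$ and $f_h=f$, the discrete problem~\eqref{eq:poissonneumannvarh} gives $b_h(u_h,w_h)=(f,w_h)_\Oh$. Hence
\[
\|w_h\|^2_{\Huh}=b_h(v_h-u_h,w_h)=b_h(v_h-\extension u,w_h)+b_h(\extension u,w_h)-(f,w_h)_\Oh
=b_h(v_h-\extension u,w_h)+\delta_h(w_h).
\]
The definition of $\delta_h$ makes sense because $\extension u\in\Huh$ by Assumption~\ref{as:ext}.

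Third, the estimate. Divide by $\|w_h\|_{\Huh}$ (the case $w_h=0$ is trivial) and use continuity of $b_h$:
\[
\|v_h-u_h\|_{\Huh}\le\|\extension u-v_h\|_{\Huh}+\frac{|\delta_h(w_h)|}{\|w_h\|_{\Huh}}.
\]
Bound the last quotient by the supremum over $V_h$. The triangle inequality $\|\extension u-u_h\|_{\Huh}\le\|\extension u-v_h\|_{\Huh}+\|v_h-u_h\|_{\Huh}$ then gives~\eqref{eq:ceaN} with $C=2$. Taking the infimum over $v_h$ completes the argument.

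There is no real obstacle here. The only points to check are that $\extension u$ belongs to $H^1(\Oh)$, so that $\delta_h$ is well defined, and that the constants do not depend on $h$; the latter holds because $b_h$ includes the full $H^1$ norm. The substantive work, namely bounding $\delta_h$ through integration by parts and the boundary mismatch $\pO$ versus $\pOh$, belongs to the subsequent estimate rather than to this lemma.
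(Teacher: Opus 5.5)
Your proof is correct and is essentially the Strang-type argument the paper has in mind: it is the same reasoning written out in the proof of Theorem~\ref{th:main}, with the simplification that $b_h(w,w)=\|w\|^2_{\Huh}$ gives coercivity and continuity constants equal to one, so no Poincar\'e inequality is needed. This also explains why the consistency term in~\eqref{eq:ceaN} appears with coefficient one, while the best-approximation term carries $C=2$.
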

Since the first term in~\eqref{eq:ceaN} is optimal, a limitation on the error
estimates can only come from the consistency term (the second one). We have
the following estimate analogous to the results of Theorem~\ref{th:Dfinal}.
\begin{theorem}
Let $u\in V$ be the solution of~\eqref{eq:poissonneumannvar} with $g_N=0$ and
$u_h\in V_h$ the solution of~\eqref{eq:poissonneumannvarh} obtained using
finite elements of polynomial degree $k\ge1$. If $u$ belongs to $H^3(\Omega)$,
then
\[
\|\extension u-u_h\|_{H^1(\Oh)}\le
C(h^s\|u\|_{H^3(\Omega)}+h^2\|f\|_{L^2(\OhmO)}),
\]
with $s=\min(k,3/2)$.
\label{th:Nfinal}
\end{theorem}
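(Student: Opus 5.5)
We start from Lemma~\ref{le:ceaN}. Since no boundary conditions are imposed on $V_h$, the nodal (or Scott--Zhang) interpolant of $\extension u\in H^3(\RE^d)$ on $\Oh$ belongs to $V_h$. Combining~\eqref{eq:extinterp} with~\eqref{eq:Stein}, the first term in~\eqref{eq:ceaN} is then bounded by $Ch^{\min(k,2)}\|u\|_{H^3(\Omega)}$, which is at most $Ch^s\|u\|_{H^3(\Omega)}$ for $h\le1$. What remains is to bound the consistency term $\delta_h(w_h)$ for an arbitrary $w_h\in V_h$.

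For $\delta_h$ we integrate by parts on $\Oh$. Now $w_h$ does not vanish on $\pOh$, so a boundary term survives:
\[
\delta_h(w_h)=\int_{\Oh}(-\Delta\extension u+\extension u-f)\,w_h\,\dx+\int_{\pOh}\Grad\extension u\cdot\n_h\,w_h\,ds .
\]
The volume integrand vanishes in $\Omega$, so the volume term reduces to an integral over $\OhmO$. On that set we apply Cauchy--Schwarz with $\|w_h\|_{L^2(\OhmO)}$. This norm is bounded by a trace-type (layer) inequality: $\OhmO$ lies within distance $Ch^2$ of $\pOh$, with base area $O(1)$. One can either use Lemma~\ref{le:local}(c) applied to the non-vanishing function, or a one-dimensional fundamental theorem argument across the thin layer. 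Both give $\|w_h\|_{L^2(\OhmO)}\le Ch\|w_h\|_{H^1(\Oh)}$, which yields the contribution $Ch(\|u\|_{H^2(\Omega)}+\|f\|_{L^2(\OhmO)})\|w_h\|_{H^1}$. Since we want $h^2$ in front of $\|f\|$, this crude estimate is not enough. Instead we use the local bound $\|w_h\|^2_{L^2(\Smt)}\le|\Smt|\,\|w_h\|^2_{L^\infty(T)}\le Ch_T^{d+1}|T|^{-1}\|w_h\|^2_{L^2(T)}\le Ch_T^2\|w_h\|^2_{L^2(T)}$. Summing over $T\in\ThB$ gives $\|w_h\|_{L^2(\OhmO)}\le Ch\|w_h\|_{L^2(\OB)}$. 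Then a layer estimate $\|w_h\|_{L^2(\OB)}\le Ch^{1/2}\|w_h\|_{H^1(\Oh)}$ (trace inequality on a strip of width $O(h)$) gives $h^{3/2}$. This is still short of $h^2$. We therefore keep $f$ separate and write the volume term as $\int_{\OhmO}(\extension u-\Delta\extension u)w_h-\int_{\OhmO}fw_h$. For the $f$ part we hope to reach $h^2$ by combining it with the geometric transport via $\Phi_h$, exactly as for $\resto_1(h)$ in the proof of Theorem~\ref{th:DL2}. If that fails, we settle for the extra half power coming from the layer estimates.

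The main step, and the main obstacle, is the boundary term $\int_{\pOh}\Grad\extension u\cdot\n_h\,w_h\,ds$. Here $\Grad u\cdot\n=0$ on $\pO$, and we transport this information with $\Phi_h$. On each boundary facet $F$ we compare $\n_h$ with $\n\circ\Phi_h$, which differ by $O(h)$ (the normal error of a chord or planar interpolant of a $C^2$ surface), and we compare $\Grad\extension u$ at $x$ and at $\Phi_h(x)$, which differ by $\|\Jac^2\extension u\|_{L^\infty}\,|x-\Phi_h(x)|\le Ch^2\|u\|_{H^3}$ using $H^3\subset W^{1,\infty}$ and Proposition~\ref{pr:phih}(c). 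This gives $|\Grad\extension u\cdot\n_h|\le Ch\|u\|_{H^3(\Omega)}$ on $\pOh$. The trace inequality on the Lipschitz polytope $\Oh$ then yields the boundary contribution $Ch\|u\|_{H^3}\|w_h\|_{H^1(\Oh)}$, which is only $O(h)$. To reach $h^{3/2}$ we localize: the facet normal error has mean $O(h^2)$ but pointwise size $O(h)$. So we split $w_h=\bar w_F+(w_h-\bar w_F)$ on each facet, where $\bar w_F$ is the facet average. The oscillating part contributes $O(h)\cdot h\|\Grad w_h\|$ through a facet Poincaré inequality and scaling, which is $O(h^{3/2})$ after summing with the boundary-layer estimate $\|\Grad w_h\|_{L^2(\OB)}$. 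The mean part requires showing $|\int_F\Grad\extension u\cdot\n_h\,ds|\le Ch^{2}|F|\,\|u\|_{H^3}$, using the symmetry of the chord error and the skeleton treatment from Assumption~\ref{as:lunette}. We expect the skeleton facets in $d=3$ and the precise facet-level cancellation to be the delicate part. Collecting all terms, the supremum in~\eqref{eq:ceaN} is bounded by $C(h^{3/2}\|u\|_{H^3}+h^2\|f\|_{L^2(\OhmO)})$, which combined with the best-approximation term proves the theorem.
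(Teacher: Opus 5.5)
Your argument takes a different route from the paper's, and as written it has two genuine gaps.

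\emph{How the paper avoids your main obstacle.} The paper never creates a boundary integral. It splits $b_h(\extension u,w_h)=b_h(\extension u-\lenoir{u},w_h)+b_h(\lenoir{u},w_h)$ and changes variables with $\Phi_h$ in the second term. It then uses the continuous weak formulation with the admissible test function $w_h\circ\Phi_h^{-1}\in H^1(\Omega)$, so the Neumann condition enters only weakly. What remains are volume terms with coefficients $\Jac\Phi_h\Jac\Phi_h^\top/|\det\Jac\Phi_h|-\matId$ and $|\det\Jac\Phi_h|^{-1}-1$. These are $O(h)$ and supported in $\Phi_h(\OB)$, a set of measure $O(h)$, so $\Grad u\in L^\infty$ gives $h^{3/2}$ at once. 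The bound $\|\extension u-\lenoir{u}\|_{H^1(\Oh)}\le Ch^{3/2}\|u\|_{H^3(\Omega)}$ handles the rest.

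\emph{First gap: the facet-mean bound fails in three dimensions.} Integrating by parts on $\Oh$ leaves you with the residual $\Grad\extension u\cdot\n_h=O(h)$ on $\pOh$. Your extra $h^{1/2}$ rests on the facet-mean bound $|\int_F\Grad\extension u\cdot\n_h\,ds|\le Ch^2|F|$. This is essentially right for $d=2$, since chord and arc share their endpoints and the mean normal error is $O(h^2)$. It is false for $d=3$. The edges of $\Phi_h(F)$ are curves at distance $O(h^2)$ from the straight edges of $F$, so the average over $F$ of $\n_h-\n\circ\Phi_h$ is generically of size $h$. For instance, take $\pO$ locally equal to $z=\frac12|y|^2$ and $F$ lying over the triangle with vertices $(0,0)$, $(h,0)$, $(0,h)$. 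Then that average is $\pm(h/6,h/6,0)+O(h^2)$. Hence $\int_F\Grad\extension u\cdot\n_h\,ds\sim h|F|$ whenever the tangential gradient at the origin is not orthogonal to $(1,1,0)$. The excess flux cancels only between neighbouring facets across shared edges. You would therefore need an edge-by-edge telescoping with weights $\bar w_F-\bar w_{F'}$, or a divergence-theorem argument on the slivers between $\pOh$ and $\pO$; the transport by $\Phi_h$ gives this for free. A minor point: $H^3$ does not put the second derivatives of $\extension u$ in $L^\infty$. Use instead the $C^{0,1/2}$ bound on $\Grad\extension u$, which still gives $O(h)$.

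\emph{Second gap: the $f$-term.} Your local bound has a slip: $|\Smt|\,|T|^{-1}\le Ch_T$, not $Ch_T^2$. So you only recover $\|w_h\|_{L^2(\OhmO)}\le Ch\|w_h\|_{H^1(\Oh)}$. This still suffices for the $u$-part of the volume term, but only once you add the layer estimate $\|\extension u-\Delta\extension u\|_{L^2(\OhmO)}\le Ch\|u\|_{H^3(\Omega)}$.

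The hoped-for $h^2\|f\|_{L^2(\OhmO)}$ cannot come from the transport. The term $-\int_{\OhmO}f\,w_h$ survives unchanged in the paper's splitting. Moreover, the estimate the paper borrows from the proof of Theorem~\ref{th:DL2} used $w(h)^I=0$ on $\pOh$, which Neumann test functions do not satisfy.

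In fact no argument can reach it. The solution $u$ does not depend on $f|_{\OhmO}$. Replacing $f$ by $f+c\chi_{\OhmO}$ changes $u_h$ by some $d_h\in V_h$ with $\int_{\Oh}d_h=c|\OhmO|$ (test with $v=1$), hence $\|d_h\|_{H^1(\Oh)}\ge c|\OhmO|\,|\Oh|^{-1/2}$. For a strictly concave curved part of $\pO$ and quasi-uniform boundary meshes, $|\OhmO|\sim h^2$, so this grows like $ch^2$. The stated bound, applied to both data, allows growth only like $ch^2|\OhmO|^{1/2}\sim ch^3$.

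What your argument and the paper's actually yield is $h\|f\|_{L^2(\OhmO)}$. Alternatively one gets $h^2$ times a stronger norm, e.g.\ $\|f\|_{H^1(\Obar)}$ via $\|f\|_{L^2(\OhmO)}\le Ch\|f\|_{H^1(\Obar)}$, as in the proof of Theorem~\ref{th:NL2}. With that correction to the statement and the three-dimensional fix above, your proof closes.
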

\begin{proof}

The consistency term in~\eqref{eq:ceaN} can be estimated by adding and
subtracting $\lenoir{u}$ as follows:
\[
b_h(\extension{u},w_h)=b_h(\extension{u}-\lenoir{u},w_h)+b_h(\lenoir{u},w_h).
\]

It turns out that the difference $b_h(\lenoir{u},w_h)-(f,w_h)_{\Oh}$ can be
estimated as in the proof of Theorem~\ref{th:DL2} with the natural
modifications coming from the zeroth-order term, leading to
\[
\aligned
&b_h(\lenoir{u},w_h)-(f,w_h)_{\Oh}=
\int_\Omega(\Grad u)^\top
\left(\frac{\Jac\Phi_h\Jac\Phi_h^\top}{|\det\Jac\Phi_h|}
-\matId\right)\Grad w_h\circ\Phi_h^{-1}\,\dxO\\
&\quad+\int_\Omega u w_h\circ\Phi_h^{-1}\big(|\det\Jac\Phi_h|^{-1}-1\big)\,\dxO
+(f,w_h\circ\Phi_h^{-1})_\Omega-(f,w_h)_{\Oh}.
\endaligned
\]
As in the previous case we get
\[
|b_h(\lenoir{u},w_h)-(f,w_h)_{\Oh}|\le
C(h^{3/2}\|u\|_{H^3(\Omega)}+h^2\|f\|_{L^2(\OhmO)})\|w_h\|_{\Huh}.
\]

The last term to be considered is $b_h(\extension{u}-\lenoir{u},w_h)$ which
can be estimated by using the following bound
\[
\|\extension{u}-\lenoir{u}\|_{\Huh}\le Ch^{3/2}\|u\|_{H^3(\Omega)};
\]
this bound is analogous to what we obtained at the end of the proof of
Theorem~\ref{th:DL2} and originates from the fact that the left-hand side
vanishes outside $\OB$, together with the usual estimates of
Proposition~\ref{pr:phih}.

\end{proof}

We conclude the discussion about Neumann boundary conditions by studying the
error in $L^2$, in analogy with what was done in Section~\ref{se:DL2} in the
Dirichlet case.

\begin{theorem}
Let $u\in H^3(\Omega)$ be the solution of~\eqref{eq:poissonneumannvar} for
$g_N=0$ and $u_h\in V_h$ the corresponding discrete solution
of~\eqref{eq:poissonneumannvarh}. Assume moreover that $f$ belongs to
$H^1(\Obar)$.  Then, for $k\ge2$,
\[
\|\extension u-u_h\|_{L^2(\Oh)}\le Ch^2(\|u\|_{H^3(\Omega)}+\|f\|_{H^1(\OhmO)}).
\]
\label{th:NL2}
\end{theorem}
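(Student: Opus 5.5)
We follow the duality argument of Theorem~\ref{th:DL2}, adapted to the Neumann setting. We add and subtract $\lenoir{u}$ and first bound $\|\extension u-\lenoir{u}\|_{L^2(\Oh)}\le Ch^{5/2}\|u\|_{H^3(\Omega)}$ exactly as at the end of the proof of Theorem~\ref{th:DL2}. It then remains to estimate $e:=\lenoir{u}-u_h$.

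\textbf{Dual problem.} We introduce $w(h)\in\Huh$ solving
\[
b_h(v,w(h))=(e,v)_\Oh\quad\forall v\in\Huh .
\]
This is a Neumann problem on the Lipschitz polytope $\Oh$. We assume the Neumann analogue of Lemma~\ref{le:dualreg}: the bound $\|w(h)\|_{H^{3/2}(\Oh)}\le C\|e\|_{L^2(\Oh)}$ and the layer estimate $\|w(h)\|_{H^1(\OBpatch)}\le C\delta^{1/2}\|e\|_{L^2(\Oh)}$, both coming from the same harmonic-analysis references. With $w(h)^I$ the Scott--Zhang quasi-interpolant, we write
\[
\|e\|^2=b_h(e,w(h)-w(h)^I)+b_h(e,w(h)^I).
\]
The first term is at most $Ch^{1/2}\|e\|_{H^1(\Oh)}\|e\|_{L^2}$. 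Using Theorem~\ref{th:Nfinal} together with $\|\extension u-\lenoir u\|_{H^1}\le Ch^{3/2}$, this is $O(h^2)$ times $\|e\|$.

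\textbf{Consistency remainder.} The second term equals $b_h(\lenoir u,w(h)^I)-(f,w(h)^I)_\Oh$, which is the consistency remainder already expanded in the proof of Theorem~\ref{th:Nfinal}. It splits into three pieces.
\begin{itemize}
\item The metric term $\int_\Omega(\Grad u)^\top(\cdots-\matId)\Grad(w(h)^I\circ\Phi_h^{-1})$. Following Theorem~\ref{th:DL2}, the matrix is $O(h)$ and supported in $\Phi_h(\OB)$ of measure $O(h)$, and $\Grad u\in L^\infty$. The layer estimate with $\delta\le Ch$ then gives $Ch^{3/2}\cdot h^{1/2}\|e\|$.
\item The reaction term $\int_\Omega u\,w(h)^I\circ\Phi_h^{-1}(|\det\Jac\Phi_h|^{-1}-1)$. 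It is bounded in the same way by $Ch\,|\OB|^{1/2}\|u\|_{L^\infty}\|w(h)^I\|_{L^2(\OB)}$. Since $w(h)\in H^1$, a trace-type bound gives $\|w(h)\|_{L^2(\OB)}\le Ch^{1/2}\|w(h)\|_{H^1}$, so this piece is $O(h^2)\|e\|$.
\item The data term $(f,w(h)^I\circ\Phi_h^{-1})_\Omega-(f,w(h)^I)_\Oh$. This is where the Neumann case differs and where I expect the main obstacle. In the Dirichlet proof the integrals over $\Spt$ and $\Smt$ were controlled because $w(h)^I$ vanishes on $\pOh$, so it is $O(h^2)$ there. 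Now $w(h)^I$ does not vanish, and a direct bound only yields $\|f\|_{L^2(\lunette)}\|w\|_{L^2(\lunette)}\sim h^{1/2}\cdot h\cdot\|f\|\,\|e\|$, which is too weak.
\end{itemize}

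\textbf{Main obstacle: the data term.} This is where the extra hypothesis $f\in H^1(\Obar)$ enters. I would try first to avoid splitting into $\Spt,\Smt$ and instead change variables in the whole first integral. This gives
\[
\int_\Oh\bigl(\lenoir f\,|\det\Jac\Phi_h|-\tilde f\bigr)w(h)^I,
\]
with $\tilde f$ the extension of $f$ to $\Oh$ (taken as $f_h=f$ there). We then estimate the pieces separately:
\begin{itemize}
\item $\lenoir f-\tilde f$ is $O(h^2)|\Grad f|$ pointwise along the segments joining $\bfx$ and $\Phi_h(\bfx)$, so the $H^1$ regularity of $f$ on $\Obar$ bounds it in $L^2(\OB)$ by $Ch^2\|f\|_{H^1}$.
\item The factor $|\det\Jac\Phi_h|-1=O(h)$ is supported in $\OB$. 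Combined with $\|w(h)^I\|_{L^2(\OB)}\le Ch^{1/2}\|e\|$ and $\|f\|_{L^2(\OB)}\le Ch^{1/2}\|f\|_{H^1}$ (again by a trace-type inequality in the layer), it gives $O(h^2)$.
\end{itemize}
Collecting, $\|e\|^2\le Ch^2(\|u\|_{H^3}+\|f\|_{H^1})\|e\|$. Bounding $\|f\|_{H^1(\Omega)}$ by $\|u\|_{H^3}$ through the equation then yields the stated result.
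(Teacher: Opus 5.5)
Your proof is correct, but it follows a different route from the paper's in two places.

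First, you keep the Dirichlet-style dual datum $\lenoir u-u_h$. The paper's sketch uses $\extension u-u_h$ directly, which is possible since $V_h$ only has to be $H^1(\Oh)$-conforming. Your choice makes $b_h(e,w(h)^I)$ coincide exactly with the consistency expression of Theorem~\ref{th:Nfinal}, at the cost of the final bound on $\extension u-\lenoir u$. The paper's choice avoids that bound but leaves $b_h(\extension u-\lenoir u,w(h)^I)$ inside $\delta_h(w(h)^I)$. That term is $O(h^2)\|e\|$ only after localizing $\Grad w(h)^I$ to $\OB$.

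Second, and more substantially, you treat the data term differently. The paper keeps the Dirichlet splitting of $\resto_1(h)$: an integral over $T$, where the shift $\Phi_h^{-1}-\Id$ acts on the discrete function $w(h)^I$, plus integrals over $\Spt$ and $\Smt$. On the lunettes it uses exactly the ``direct bound'' you discarded, and that bound suffices. Since the lunettes lie in a layer of width $O(h^2)$, $f\in H^1(\Obar)$ gives $\|f\|_{L^2(\OmOh)}+\|f\|_{L^2(\OhmO)}\le Ch\|f\|_{H^1(\Obar)}$, not $h^{1/2}$. The same layer inequality gives $\|w(h)^I\|_{L^2}\le Ch\|w(h)^I\|_{H^1(\Oh)}$ there, hence a contribution $O(h^2)\|e\|$.

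Your global change of variables instead puts the shift on $f$ and isolates the Jacobian factor. This avoids any description of the lunettes, which is delicate in three dimensions (see Remark~\ref{re:mismatch}). It uses only $\|\Phi_h-\Id\|_{L^\infty}\le Ch^2$, $\bigl\||\det\Jac\Phi_h|-1\bigr\|_{L^\infty}\le Ch$, and two $O(h)$-layer estimates.

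Its one soft spot is the bound $\|\lenoir f-f\|_{L^2(\OB)}\le Ch^2\|\Grad f\|$. It needs $\Grad f$ along the segments $[\bfx,\Phi_h(\bfx)]$, and Proposition~\ref{pr:phih} does not place these inside $\Obar$. You can fix this in either of two ways:
\begin{itemize}
\item run the argument on an $H^1(\RE^d)$ extension such as $\extension(f|_\Omega)$, and absorb the mismatch with $f$ on $\OhmO$ by the $O(h^2)$-layer bound above;
\item move the shift onto $w(h)^I$, as the paper does.
\end{itemize}

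Both routes use, for the metric term, the Neumann analogue of the gradient layer estimate in Lemma~\ref{le:dualreg}. You rightly state it as an assumption; the paper uses it implicitly.
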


\begin{proof} \emph{(Sketch)}.
We highlight the differences from the proof of Theorem~\ref{th:DL2} when
Neumann boundary conditions are used instead of Dirichlet.
The first difference, making the analysis easier, is that, since we do not
need to be conforming in $\Huo$ but in $\Hu$ only, the dual problem does not
need to consider $\lenoir{u}-u_h$ but can work directly with
$\extension u-u_h$. In this case the dual problem reads: find $w(h)\in\Huh$
such that
\[
b_h(v,w(h))=(\extension u-u_h,v)_{\Oh}\quad\forall v\in\Huh.
\]
By proceeding along the lines of Theorem~\ref{th:DL2}, we have
\[
\|\extension u-u_h\|^2_{L^2(\Oh)}=
b_h(\extension u-u_h,w(h)-w(h)^I)+\delta_h(w(h)^I),
\]
with
\[
\delta_h(v)=b_h(\extension u,v)-(f,v)_{\Oh}.
\]
The expression $\delta_h(w(h)^I)$ is the analogue of $\resto(h)$ in the
proof of the Dirichlet case and we split it similarly as the sum of
$\resto_1(h)$ and $\resto_2(h)$.

The estimation of $\resto_2(h)$ can proceed verbatim as in that proof, since
the boundary conditions were not used there. We get
\[
|\resto_2(h)|\le Ch^2\|u\|_{H^3(\Omega)}\|\extension u-u_h\|_{L^2(\Oh)}.
\]

On the other hand, the first term $\resto_1(h)$ requires an ad hoc treatment.
We will use the following bound, valid for $\delta\in(0,\diam\Oh]$ whenever
$\Omega$ is a strongly Lipschitz domain:
\[
\|v\|_{L^2(\OBpatch)}\le C\delta^{1/2}\|v\|_{\Huh}\quad\forall v\in\Huh.
\]
We will also use the following localized bound that comes from the extra
regularity of $f$:
\[
\|f\|_{L^2(\OmOh)}+\|f\|_{L^2(\OhmO)}\le Ch\|f\|_{H^1(\Obar)}.
\]
With these estimates, the first remainder can be estimated as
\[
|\resto_1(h)|\le Ch^2\|f\|_{H^1(\Obar)}\|\extension u-u_h\|_{L^2(\Oh)}.
\]

The final result is obtained by observing that from the problem in strong form
we have that $\|f\|_{H^1(\Obar)}$ is bounded by
$\|u\|_{H^3(\Omega)}+\|f\|_{H^1(\OhmO)}$.

\end{proof}

\begin{remark}
Theorems~\ref{th:Nfinal} and~\ref{th:NL2} show barriers similar to those of
the Dirichlet case. Namely, the energy norm error is capped by $O(h^3/2)$ and
the $L^2$ error by $O(h^2)$ no matter how smooth the solution is, and
independently of the degree $k\ge2$.
\label{re:barrier4}
\end{remark}

\section{Non homogeneous boundary conditions}
\label{se:non0}

The extension of the previous results, both in the Dirichlet and Neumann
cases, does not add significant technicalities. Actually, the major
difficulties arise from the definition of the approximation of $g$
in~\eqref{eq:poissongvar} and of $g_N$ in~\eqref{eq:poissonneumannvar}.

If these definitions are performed without adding errors of lower order, the
caps of Remarks~\ref{re:barrier2}, \ref{re:barrier3}, and~\ref{re:barrier4}
remain unchanged.

We sketch the arguments in the case of Dirichlet boundary conditions. The
solution of~\eqref{eq:poissongvar} is seen as $u=u_0+u_g$. It is natural to
choose as $g_h$ a discretization of the function $\lenoir{g}=g\circ\Phi_h$.
Then the estimate of $u_0-u_{0,h}$ follows what we have already done, with an
additional term coming from the source term that depends on
$\|u_g-u_{\lenoir{g}}\|$. This additional term relies on the bound
$\|\Id-\Phi_h\|_{\infty}=O(h^2)$, obtained from Proposition~\ref{pr:phih}. It
follows that the existing caps ($O(h^{3/2})$ and $O(h^2)$ in $H^1$ and $L^2$,
respectively) are not affected by the additional consistency term.
In practice, $g_h$ can be taken as a projection of $\lenoir{g}$ so that no
quadrature error is added to the computation of $u_h$.

The non-homogeneous Neumann case involves the estimate of the mismatch of the
boundary terms, namely
\[
\int_{\partial\Omega}g_Nv\,ds\quad(v\in V)
\]
and
\[
\int_{\pOh}g_{N,h}v\,ds\quad(v\in V_h).
\]
Here, again, we can use the mapping $\Phi_h$ and estimate the following
difference when we consider $g_{N,h}$ as a suitable projection of
$\lenoir{(g_N)}$:
\[
\int_{\partial\Omega}g_Nv_h\circ\Phi_h^{-1}\,ds-
\int_{\pOh}g_{N,h}v_h\,ds\quad(v_h\in V_h).
\]
The estimate of this last term, when $g_{N,h}$ is the interpolant of
$\lenoir{(g_N)}$, can be performed as in~\cite[Lemma on p.~579]{Lenoir}.

\section{Numerical results}
\label{se:num}

In this section we collect some numerical results that complement the
theoretical investigations presented before. Our codes are written in Python
with the legacy FEniCS library. 

Our results are computed on four domains, all satisfying
Assumption~\ref{as:domain} and sketched in Figure~\ref{fg:domains}:
two in two dimensions and two in three dimensions.
The circle and the sphere are
centered at the origin and have radius one. The other two domains are the
box $(0,1)^d$ minus a portion of the ball centered at the origin and of radius
$1/2$.
The domains are approximated by a sequence of polytopes that are triangulated
using Gmsh~\cite{Gmsh} in such a way that Assumption~\ref{as:lunette} is
satisfied.
The domains are chosen so that their boundaries have both convex and concave
components.

\begin{figure}
\begin{center}
\includegraphics[scale=.65]{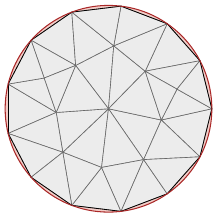}\hspace{.2cm}
\includegraphics[scale=.65]{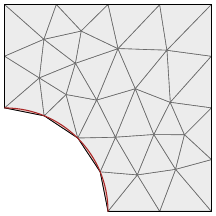}\hspace{.2cm}
\includegraphics[scale=.65]{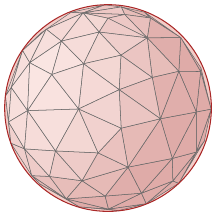}\hspace{.2cm}
\includegraphics[scale=.65]{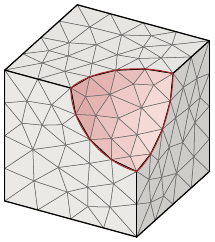}
\caption{Our domains $\Omega$ and a sample of corresponding computational
domains $\Oh$.}
\label{fg:domains}
\end{center}
\end{figure}

For all our tests we use finite elements of degree $k=1,2,3$.

In Table~\ref{tb:rates} we recall the theoretical bounds that we proved in the
previous sections.

\begin{table}
\begin{center}
\footnotesize
\begin{tabular}{c|cc|cc|c}
\hline\hline\\[-8pt]
 & \multicolumn{2}{c|}{best approximation} & \multicolumn{2}{c|}{error $u-u_h$} & \\
$k$ & $L^2$ & $H^1$ & $L^2$ & $H^1$ & regularity\\[3pt]
\hline
\multicolumn{6}{c}{essential (Dirichlet) condition}\\
\hline
$1$ & $h^{2}$   & $h$       & $h^{2}$ & $h$       & $u\in H^{2}(\Omega)$\\
$2$ & $h^{5/2}$ & $h^{3/2}$ & $h^{2}$ & $h^{3/2}$ & $u\in H^{3}(\Omega)$\\
$3$ & $h^{5/2}$ & $h^{3/2}$ & $h^{2}$ & $h^{3/2}$ & $u\in H^{4}(\Omega)$\\
\hline
\multicolumn{6}{c}{natural (Neumann) condition}\\
\hline
$1$ & $h^{2}$ & $h$     & $h^{2}$ & $h$       & $u\in H^{2}(\Omega)$\\
$2$ & $h^{3}$ & $h^{2}$ & $h^{2}$ & $h^{3/2}$ & $u\in H^{3}(\Omega)$\\
$3$ & $h^{4}$ & $h^{3}$ & $h^{2}$ & $h^{3/2}$ & $u\in H^{4}(\Omega)$\\
\hline
\end{tabular}
\par\vspace{10pt}
\caption{Theoretical convergence rates in $h$ for the straight-sided $P_k$
approximation of the homogeneous Dirichlet and Neumann problems on $\Omega_h$.}
\label{tb:rates}
\end{center}
\end{table}

\begin{example}
\emph{Best approximation.}

We start with the best approximation in the case of Dirichlet boundary
conditions: given a function $u$ defined in all $\RE^d$ and vanishing on
$\partial\Omega$ (or part of it), we compute its $L^2(\Oh)$ projection onto
the finite element space of functions vanishing on $\partial\Oh$ (or on the
corresponding part of $\partial\Oh$). The function $u$ is chosen as follows:
\[
\aligned
&u(x,y)=(x^2+y^2-1)e^{x+y}&&\text{for the circle}\\
&u(x,y)=(x^2+y^2-0.25)e^{x+y}&&\text{for the square minus the circle}\\
&u(x,y,z)=(x^2+y^2+z^2-1)e^{x+y+z}&&\text{for the sphere}\\
&u(x,y,z)=(x^2+y^2+z^2-0.25)e^x&&\text{for the cube minus the sphere}
\endaligned
\]
All these functions vanish on the curved parts of the boundaries of their
domains.

The results are shown in Tables~\ref{tb:proj_circle}, \ref{tb:proj_4hole},
\ref{tb:proj_sphere}, and~\ref{tb:proj_4hole3D}, respectively, and confirm the
barrier stated in Remark~\ref{re:barrier}.

\begin{table}
\setlength{\tabcolsep}{3pt}
\noindent%
\begin{minipage}[t]{\dimexpr(\linewidth-1\dimexpr0.03\linewidth\relax)/2\relax}
\setlength{\abovecaptionskip}{0pt}\setlength{\belowcaptionskip}{0pt}
\footnotesize
\sbox0{%
\begin{tabular}[t]{rr|cr|cr}
\hline\hline\\[-8pt]
Nelements&Ndofs&\multicolumn{1}{r}{$\|u-u_h\|_{L^2}$}&rate&\multicolumn{1}{r}{$\vert u-u_h\vert_{H^1}$}&rate\\[3pt]
\hline
\multicolumn{6}{c}{$k= 1$}\\
\hline
8424 & 4314 & $4.815\times 10^{-4}$ & -- & $1.129\times 10^{-1}$ & -- \\
33419 & 16912 & $1.127\times 10^{-4}$ & 2.13 & $5.549\times 10^{-2}$ & 1.04 \\
139743 & 70275 & $2.692\times 10^{-5}$ & 2.01 & $2.758\times 10^{-2}$ & 0.98 \\
548805 & 275208 & $6.601\times 10^{-6}$ & 2.06 & $1.376\times 10^{-2}$ & 1.02 \\
2146423 & 1074821 & $1.634\times 10^{-6}$ & 2.05 & $6.883\times 10^{-3}$ & 1.02 \\
8527920 & 4267178 & $4.065\times 10^{-7}$ & 2.02 & $3.441\times 10^{-3}$ & 1.01 \\
\hline
\multicolumn{6}{c}{$k= 2$}\\
\hline
8424 & 17051 & $5.319\times 10^{-5}$ & -- & $1.335\times 10^{-2}$ & -- \\
33419 & 67242 & $9.411\times 10^{-6}$ & 2.52 & $4.773\times 10^{-3}$ & 1.50 \\
139743 & 280292 & $1.662\times 10^{-6}$ & 2.43 & $1.681\times 10^{-3}$ & 1.46 \\
548805 & 1099220 & $2.960\times 10^{-7}$ & 2.53 & $5.944\times 10^{-4}$ & 1.52 \\
2146423 & 4296064 & $5.218\times 10^{-8}$ & 2.55 & $2.105\times 10^{-4}$ & 1.52 \\
8527920 & 17062275 & $9.222\times 10^{-9}$ & 2.51 & $7.439\times 10^{-5}$ & 1.51 \\
\hline
\multicolumn{6}{c}{$k= 3$}\\
\hline
8424 & 38212 & $3.586\times 10^{-5}$ & -- & $1.578\times 10^{-2}$ & -- \\
33419 & 150991 & $6.404\times 10^{-6}$ & 2.51 & $5.601\times 10^{-3}$ & 1.51 \\
139743 & 630052 & $1.132\times 10^{-6}$ & 2.43 & $1.985\times 10^{-3}$ & 1.45 \\
548805 & 2472037 & $2.019\times 10^{-7}$ & 2.52 & $6.992\times 10^{-4}$ & 1.53 \\
2146423 & 9663730 & $3.561\times 10^{-8}$ & 2.55 & $2.480\times 10^{-4}$ & 1.52 \\
8527920 & 38385292 & $6.294\times 10^{-9}$ & 2.51 & $8.772\times 10^{-5}$ & 1.51 \\
\hline
\end{tabular}%
}%
{\centering\ifdim\wd0>\linewidth \resizebox{\linewidth}{!}{\usebox0}\else\usebox0\fi\par}
\par\vspace{10pt}
\caption{Convergence rate for the $L^2$ projection on the circle.}
\label{tb:proj_circle}
\end{minipage}%
\hfill
\begin{minipage}[t]{\dimexpr(\linewidth-1\dimexpr0.03\linewidth\relax)/2\relax}
\setlength{\abovecaptionskip}{0pt}\setlength{\belowcaptionskip}{0pt}
\footnotesize
\sbox0{%
\begin{tabular}[t]{rr|cr|cr}
\hline\hline\\[-8pt]
Nelements&Ndofs&\multicolumn{1}{r}{$\|u-u_h\|_{L^2}$}&rate&\multicolumn{1}{r}{$\vert u-u_h\vert_{H^1}$}&rate\\[3pt]
\hline
\multicolumn{6}{c}{$k= 1$}\\
\hline
8785 & 4515 & $2.274\times 10^{-4}$ & -- & $1.108\times 10^{-1}$ & -- \\
35037 & 17762 & $5.348\times 10^{-5}$ & 2.11 & $5.438\times 10^{-2}$ & 1.04 \\
139544 & 70258 & $1.299\times 10^{-5}$ & 2.06 & $2.708\times 10^{-2}$ & 1.01 \\
557133 & 279537 & $3.215\times 10^{-6}$ & 2.02 & $1.353\times 10^{-2}$ & 1.00 \\
2230087 & 1116983 & $7.968\times 10^{-7}$ & 2.01 & $6.755\times 10^{-3}$ & 1.00 \\
8919855 & 4463805 & $1.985\times 10^{-7}$ & 2.01 & $3.376\times 10^{-3}$ & 1.00 \\
\hline
\multicolumn{6}{c}{$k= 2$}\\
\hline
8785 & 17814 & $3.076\times 10^{-6}$ & -- & $1.623\times 10^{-3}$ & -- \\
35037 & 70560 & $5.440\times 10^{-7}$ & 2.52 & $5.638\times 10^{-4}$ & 1.54 \\
139544 & 280059 & $9.610\times 10^{-8}$ & 2.51 & $1.955\times 10^{-4}$ & 1.54 \\
557133 & 1116206 & $1.684\times 10^{-8}$ & 2.52 & $6.919\times 10^{-5}$ & 1.50 \\
2230087 & 4464052 & $3.004\times 10^{-9}$ & 2.49 & $2.426\times 10^{-5}$ & 1.51 \\
8919855 & 17847464 & $5.303\times 10^{-10}$ & 2.50 & $8.584\times 10^{-6}$ & 1.50 \\
\hline
\multicolumn{6}{c}{$k= 3$}\\
\hline
8785 & 39898 & $1.994\times 10^{-6}$ & -- & $1.808\times 10^{-3}$ & -- \\
35037 & 158395 & $3.629\times 10^{-7}$ & 2.47 & $6.468\times 10^{-4}$ & 1.49 \\
139544 & 629404 & $6.492\times 10^{-8}$ & 2.49 & $2.263\times 10^{-4}$ & 1.52 \\
557133 & 2510008 & $1.145\times 10^{-8}$ & 2.51 & $8.087\times 10^{-5}$ & 1.49 \\
2230087 & 10041208 & $2.045\times 10^{-9}$ & 2.48 & $2.848\times 10^{-5}$ & 1.51 \\
8919855 & 40150978 & $3.618\times 10^{-10}$ & 2.50 & $1.008\times 10^{-5}$ & 1.50 \\
\hline
\end{tabular}%
}%
{\centering\ifdim\wd0>\linewidth \resizebox{\linewidth}{!}{\usebox0}\else\usebox0\fi\par}
\par\vspace{10pt}
\caption{Convergence rate for the $L^2$ projection on the square minus a circle geometry.}
\label{tb:proj_4hole}
\end{minipage}%
\par
\par\vspace{1.5em}
\noindent%
\begin{minipage}[t]{\dimexpr(\linewidth-1\dimexpr0.03\linewidth\relax)/2\relax}
\setlength{\abovecaptionskip}{0pt}\setlength{\belowcaptionskip}{0pt}
\footnotesize
\sbox0{%
\begin{tabular}[t]{rr|cr|cr}
\hline\hline\\[-8pt]
Nelements&Ndofs&\multicolumn{1}{r}{$\|u-u_h\|_{L^2}$}&rate&\multicolumn{1}{r}{$\vert u-u_h\vert_{H^1}$}&rate\\[3pt]
\hline
\multicolumn{6}{c}{$k= 1$}\\
\hline
5430 & 1259 & $4.870\times 10^{-2}$ & -- & $1.364\times 10^{0}$ & -- \\
40105 & 7825 & $1.117\times 10^{-2}$ & 2.42 & $6.730\times 10^{-1}$ & 1.16 \\
320817 & 56747 & $2.384\times 10^{-3}$ & 2.34 & $3.213\times 10^{-1}$ & 1.12 \\
2473741 & 416162 & $5.186\times 10^{-4}$ & 2.30 & $1.539\times 10^{-1}$ & 1.11 \\
19491401 & 3191093 & $1.185\times 10^{-4}$ & 2.17 & $7.531\times 10^{-2}$ & 1.05 \\
154257125 & 24854019 & $2.788\times 10^{-5}$ & 2.11 & $3.715\times 10^{-2}$ & 1.03 \\
\hline
\multicolumn{6}{c}{$k= 2$}\\
\hline
728 & 1337 & $4.087\times 10^{-2}$ & -- & $8.112\times 10^{-1}$ & -- \\
5430 & 8648 & $7.176\times 10^{-3}$ & 2.80 & $2.881\times 10^{-1}$ & 1.66 \\
40105 & 58448 & $1.288\times 10^{-3}$ & 2.70 & $1.051\times 10^{-1}$ & 1.58 \\
320817 & 445238 & $2.194\times 10^{-4}$ & 2.62 & $3.663\times 10^{-2}$ & 1.56 \\
2473741 & 3349542 & $3.813\times 10^{-5}$ & 2.60 & $1.287\times 10^{-2}$ & 1.55 \\
19491401 & 26047504 & $6.672\times 10^{-6}$ & 2.55 & $4.537\times 10^{-3}$ & 1.53 \\
\hline
\multicolumn{6}{c}{$k= 3$}\\
\hline
271 & 1634 & $6.034\times 10^{-2}$ & -- & $1.489\times 10^{0}$ & -- \\
1467 & 7948 & $1.328\times 10^{-2}$ & 2.87 & $5.790\times 10^{-1}$ & 1.79 \\
10790 & 53640 & $2.388\times 10^{-3}$ & 2.70 & $2.135\times 10^{-1}$ & 1.57 \\
80074 & 378740 & $4.495\times 10^{-4}$ & 2.56 & $7.838\times 10^{-2}$ & 1.54 \\
622920 & 2867341 & $8.342\times 10^{-5}$ & 2.50 & $2.865\times 10^{-2}$ & 1.49 \\
4936952 & 22416570 & $1.475\times 10^{-5}$ & 2.53 & $1.012\times 10^{-2}$ & 1.52 \\
\hline
\end{tabular}%
}%
{\centering\ifdim\wd0>\linewidth \resizebox{\linewidth}{!}{\usebox0}\else\usebox0\fi\par}
\par\vspace{10pt}
\caption{Convergence rate for the $L^2$ projection on the sphere.}
\label{tb:proj_sphere}
\end{minipage}%
\hfill
\begin{minipage}[t]{\dimexpr(\linewidth-1\dimexpr0.03\linewidth\relax)/2\relax}
\setlength{\abovecaptionskip}{0pt}\setlength{\belowcaptionskip}{0pt}
\footnotesize
\sbox0{%
\begin{tabular}[t]{rr|cr|cr}
\hline\hline\\[-8pt]
Nelements&Ndofs&\multicolumn{1}{r}{$\|u-u_h\|_{L^2}$}&rate&\multicolumn{1}{r}{$\vert u-u_h\vert_{H^1}$}&rate\\[3pt]
\hline
\multicolumn{6}{c}{$k= 1$}\\
\hline
5243 & 1321 & $8.486\times 10^{-3}$ & -- & $3.964\times 10^{-1}$ & -- \\
36986 & 7635 & $1.868\times 10^{-3}$ & 2.59 & $1.960\times 10^{-1}$ & 1.20 \\
282071 & 51649 & $4.134\times 10^{-4}$ & 2.37 & $9.651\times 10^{-2}$ & 1.11 \\
2191566 & 374805 & $9.594\times 10^{-5}$ & 2.21 & $4.786\times 10^{-2}$ & 1.06 \\
17352530 & 2864003 & $2.275\times 10^{-5}$ & 2.12 & $2.374\times 10^{-2}$ & 1.03 \\
138171918 & 22340094 & $5.489\times 10^{-6}$ & 2.08 & $1.180\times 10^{-2}$ & 1.02 \\
\hline
\multicolumn{6}{c}{$k= 2$}\\
\hline
1012 & 1935 & $1.600\times 10^{-3}$ & -- & $6.327\times 10^{-2}$ & -- \\
5243 & 8756 & $2.735\times 10^{-4}$ & 3.51 & $2.011\times 10^{-2}$ & 2.28 \\
36986 & 55535 & $4.515\times 10^{-5}$ & 2.93 & $6.432\times 10^{-3}$ & 1.85 \\
282071 & 398290 & $7.478\times 10^{-6}$ & 2.74 & $2.082\times 10^{-3}$ & 1.72 \\
2191566 & 2991492 & $1.306\times 10^{-6}$ & 2.60 & $7.166\times 10^{-4}$ & 1.59 \\
17352530 & 23280977 & $2.345\times 10^{-7}$ & 2.51 & $2.545\times 10^{-4}$ & 1.51 \\
\hline
\multicolumn{6}{c}{$k= 3$}\\
\hline
364 & 2210 & $1.347\times 10^{-3}$ & -- & $8.584\times 10^{-2}$ & -- \\
1544 & 8652 & $4.354\times 10^{-4}$ & 2.48 & $3.517\times 10^{-2}$ & 1.96 \\
9665 & 49272 & $9.165\times 10^{-5}$ & 2.69 & $1.340\times 10^{-2}$ & 1.66 \\
72571 & 348698 & $1.719\times 10^{-5}$ & 2.57 & $4.657\times 10^{-3}$ & 1.62 \\
554579 & 2573625 & $2.842\times 10^{-6}$ & 2.70 & $1.559\times 10^{-3}$ & 1.64 \\
4366891 & 19913036 & $5.127\times 10^{-7}$ & 2.51 & $5.604\times 10^{-4}$ & 1.50 \\
\hline
\end{tabular}%
}%
{\centering\ifdim\wd0>\linewidth \resizebox{\linewidth}{!}{\usebox0}\else\usebox0\fi\par}
\par\vspace{10pt}
\caption{Convergence rate for the $L^2$ projection on the cube minus a sphere geometry.}
\label{tb:proj_4hole3D}
\end{minipage}%
\par
\end{table}

\end{example}

\begin{example}
\emph{Diffusion problem: Dirichlet boundary conditions.}

For each domain we take $f$ so that the solutions $u$ to the problem expressed
in Equation~\eqref{eq:poissong}, with $g=u$ on $\partial\Omega$, are the same
functions $u$ as in the previous example.

We compute the errors in the $H^1(\Oh)$ seminorm and in the $L^2(\Oh)$ norm,
and report the results in Tables~\ref{tb:lap_circle}, \ref{tb:lap_4hole},
\ref{tb:lap_sphere}, and~\ref{tb:lap_4hole3D}, respectively. Also in this case
the barriers stated in Remarks~\ref{re:barrier2} and~\ref{re:barrier3} are
confirmed by our tests.

\begin{table}
\setlength{\tabcolsep}{3pt}
\noindent%
\begin{minipage}[t]{\dimexpr(\linewidth-1\dimexpr0.03\linewidth\relax)/2\relax}
\setlength{\abovecaptionskip}{0pt}\setlength{\belowcaptionskip}{0pt}
\footnotesize
\sbox0{%
\begin{tabular}[t]{rr|cr|cr}
\hline\hline\\[-8pt]
Nelements&Ndofs&\multicolumn{1}{r}{$\|u-u_h\|_{L^2}$}&rate&\multicolumn{1}{r}{$\vert u-u_h\vert_{H^1}$}&rate\\[3pt]
\hline
\multicolumn{6}{c}{$k= 1$}\\
\hline
8424 & 4314 & $1.207\times 10^{-3}$ & -- & $1.102\times 10^{-1}$ & -- \\
33419 & 16912 & $2.989\times 10^{-4}$ & 2.04 & $5.454\times 10^{-2}$ & 1.03 \\
139743 & 70275 & $7.482\times 10^{-5}$ & 1.94 & $2.731\times 10^{-2}$ & 0.97 \\
548805 & 275208 & $1.872\times 10^{-5}$ & 2.03 & $1.366\times 10^{-2}$ & 1.01 \\
2146423 & 1074821 & $4.691\times 10^{-6}$ & 2.03 & $6.845\times 10^{-3}$ & 1.01 \\
8527920 & 4267178 & $1.174\times 10^{-6}$ & 2.01 & $3.425\times 10^{-3}$ & 1.00 \\
\hline
\multicolumn{6}{c}{$k= 2$}\\
\hline
8424 & 17051 & $5.259\times 10^{-4}$ & -- & $8.305\times 10^{-3}$ & -- \\
33419 & 67242 & $1.315\times 10^{-4}$ & 2.02 & $2.973\times 10^{-3}$ & 1.50 \\
139743 & 280292 & $3.288\times 10^{-5}$ & 1.94 & $1.048\times 10^{-3}$ & 1.46 \\
548805 & 1099220 & $8.220\times 10^{-6}$ & 2.03 & $3.729\times 10^{-4}$ & 1.51 \\
2146423 & 4296064 & $2.055\times 10^{-6}$ & 2.03 & $1.315\times 10^{-4}$ & 1.53 \\
8527920 & 17062275 & $5.118\times 10^{-7}$ & 2.02 & $4.650\times 10^{-5}$ & 1.51 \\
\hline
\multicolumn{6}{c}{$k= 3$}\\
\hline
8424 & 38212 & $5.236\times 10^{-4}$ & -- & $6.313\times 10^{-3}$ & -- \\
33419 & 150991 & $1.312\times 10^{-4}$ & 2.01 & $2.249\times 10^{-3}$ & 1.50 \\
139743 & 630052 & $3.284\times 10^{-5}$ & 1.94 & $7.941\times 10^{-4}$ & 1.46 \\
548805 & 2472037 & $8.215\times 10^{-6}$ & 2.03 & $2.816\times 10^{-4}$ & 1.52 \\
2146423 & 9663730 & $2.054\times 10^{-6}$ & 2.03 & $9.949\times 10^{-5}$ & 1.53 \\
8527920 & 38385292 & $5.130\times 10^{-7}$ & 2.01 & $3.517\times 10^{-5}$ & 1.51 \\
\hline
\end{tabular}%
}%
{\centering\ifdim\wd0>\linewidth \resizebox{\linewidth}{!}{\usebox0}\else\usebox0\fi\par}
\par\vspace{10pt}
\caption{Convergence rate for the solution of the Poisson problem on the circle.}
\label{tb:lap_circle}
\end{minipage}%
\hfill
\begin{minipage}[t]{\dimexpr(\linewidth-1\dimexpr0.03\linewidth\relax)/2\relax}
\setlength{\abovecaptionskip}{0pt}\setlength{\belowcaptionskip}{0pt}
\footnotesize
\sbox0{%
\begin{tabular}[t]{rr|cr|cr}
\hline\hline\\[-8pt]
Nelements&Ndofs&\multicolumn{1}{r}{$\|u-u_h\|_{L^2}$}&rate&\multicolumn{1}{r}{$\vert u-u_h\vert_{H^1}$}&rate\\[3pt]
\hline
\multicolumn{6}{c}{$k= 1$}\\
\hline
8785 & 4515 & $5.774\times 10^{-4}$ & -- & $1.090\times 10^{-1}$ & -- \\
35037 & 17762 & $1.435\times 10^{-4}$ & 2.03 & $5.378\times 10^{-2}$ & 1.03 \\
139544 & 70258 & $3.578\times 10^{-5}$ & 2.02 & $2.687\times 10^{-2}$ & 1.01 \\
557133 & 279537 & $8.961\times 10^{-6}$ & 2.01 & $1.346\times 10^{-2}$ & 1.00 \\
2230087 & 1116983 & $2.237\times 10^{-6}$ & 2.00 & $6.723\times 10^{-3}$ & 1.00 \\
8919855 & 4463805 & $5.590\times 10^{-7}$ & 2.00 & $3.361\times 10^{-3}$ & 1.00 \\
\hline
\multicolumn{6}{c}{$k= 2$}\\
\hline
8785 & 17814 & $2.424\times 10^{-5}$ & -- & $1.085\times 10^{-3}$ & -- \\
35037 & 70560 & $6.193\times 10^{-6}$ & 1.98 & $3.641\times 10^{-4}$ & 1.59 \\
139544 & 280059 & $1.550\times 10^{-6}$ & 2.01 & $1.253\times 10^{-4}$ & 1.55 \\
557133 & 1116206 & $3.897\times 10^{-7}$ & 2.00 & $4.333\times 10^{-5}$ & 1.54 \\
2230087 & 4464052 & $9.801\times 10^{-8}$ & 1.99 & $1.526\times 10^{-5}$ & 1.51 \\
8919855 & 17847464 & $2.583\times 10^{-8}$ & 1.92 & $5.379\times 10^{-6}$ & 1.51 \\
\hline
\multicolumn{6}{c}{$k= 3$}\\
\hline
8785 & 39898 & $2.426\times 10^{-5}$ & -- & $7.411\times 10^{-4}$ & -- \\
35037 & 158395 & $6.197\times 10^{-6}$ & 1.98 & $2.620\times 10^{-4}$ & 1.51 \\
139544 & 629404 & $1.551\times 10^{-6}$ & 2.01 & $9.207\times 10^{-5}$ & 1.52 \\
557133 & 2510008 & $3.898\times 10^{-7}$ & 2.00 & $3.240\times 10^{-5}$ & 1.51 \\
2230087 & 10041208 & $9.780\times 10^{-8}$ & 1.99 & $1.146\times 10^{-5}$ & 1.50 \\
8919855 & 40150978 & $2.489\times 10^{-8}$ & 1.97 & $4.052\times 10^{-6}$ & 1.50 \\
\hline
\end{tabular}%
}%
{\centering\ifdim\wd0>\linewidth \resizebox{\linewidth}{!}{\usebox0}\else\usebox0\fi\par}
\par\vspace{10pt}
\caption{Convergence rate for the solution of the Poisson problem on the square minus a circle geometry.}
\label{tb:lap_4hole}
\end{minipage}%
\par
\par\vspace{1.5em}
\noindent%
\begin{minipage}[t]{\dimexpr(\linewidth-1\dimexpr0.03\linewidth\relax)/2\relax}
\setlength{\abovecaptionskip}{0pt}\setlength{\belowcaptionskip}{0pt}
\footnotesize
\sbox0{%
\begin{tabular}[t]{rr|cr|cr}
\hline\hline\\[-8pt]
Nelements&Ndofs&\multicolumn{1}{r}{$\|u-u_h\|_{L^2}$}&rate&\multicolumn{1}{r}{$\vert u-u_h\vert_{H^1}$}&rate\\[3pt]
\hline
\multicolumn{6}{c}{$k= 1$}\\
\hline
5430 & 1259 & $7.591\times 10^{-2}$ & -- & $1.198\times 10^{0}$ & -- \\
40105 & 7825 & $1.968\times 10^{-2}$ & 2.22 & $6.102\times 10^{-1}$ & 1.11 \\
320817 & 56747 & $4.768\times 10^{-3}$ & 2.15 & $2.999\times 10^{-1}$ & 1.08 \\
2473741 & 416162 & $1.152\times 10^{-3}$ & 2.14 & $1.468\times 10^{-1}$ & 1.08 \\
19491401 & 3191093 & $2.831\times 10^{-4}$ & 2.07 & $7.266\times 10^{-2}$ & 1.04 \\
154257125 & 24854019 & $7.092\times 10^{-5}$ & 2.02 & $3.609\times 10^{-2}$ & 1.02 \\
\hline
\multicolumn{6}{c}{$k= 2$}\\
\hline
728 & 1337 & $9.318\times 10^{-2}$ & -- & $5.045\times 10^{-1}$ & -- \\
5430 & 8648 & $2.289\times 10^{-2}$ & 2.26 & $1.798\times 10^{-1}$ & 1.66 \\
40105 & 58448 & $5.905\times 10^{-3}$ & 2.13 & $6.489\times 10^{-2}$ & 1.60 \\
320817 & 445238 & $1.446\times 10^{-3}$ & 2.08 & $2.292\times 10^{-2}$ & 1.54 \\
2473741 & 3349542 & $3.593\times 10^{-4}$ & 2.07 & $8.104\times 10^{-3}$ & 1.55 \\
19491401 & 26047504 & $8.946\times 10^{-5}$ & 2.03 & $2.870\times 10^{-3}$ & 1.52 \\
\hline
\multicolumn{6}{c}{$k= 3$}\\
\hline
271 & 1634 & $1.869\times 10^{-1}$ & -- & $5.750\times 10^{-1}$ & -- \\
1467 & 7948 & $5.371\times 10^{-2}$ & 2.36 & $2.265\times 10^{-1}$ & 1.77 \\
10790 & 53640 & $1.353\times 10^{-2}$ & 2.17 & $8.149\times 10^{-2}$ & 1.61 \\
80074 & 378740 & $3.516\times 10^{-3}$ & 2.07 & $2.981\times 10^{-2}$ & 1.54 \\
622920 & 2867341 & $9.067\times 10^{-4}$ & 2.01 & $1.083\times 10^{-2}$ & 1.50 \\
4936952 & 22416570 & $2.260\times 10^{-4}$ & 2.03 & $3.823\times 10^{-3}$ & 1.52 \\
\hline
\end{tabular}%
}%
{\centering\ifdim\wd0>\linewidth \resizebox{\linewidth}{!}{\usebox0}\else\usebox0\fi\par}
\par\vspace{10pt}
\caption{Convergence rate for the solution of the Poisson problem on the sphere.}
\label{tb:lap_sphere}
\end{minipage}%
\hfill
\begin{minipage}[t]{\dimexpr(\linewidth-1\dimexpr0.03\linewidth\relax)/2\relax}
\setlength{\abovecaptionskip}{0pt}\setlength{\belowcaptionskip}{0pt}
\footnotesize
\sbox0{%
\begin{tabular}[t]{rr|cr|cr}
\hline\hline\\[-8pt]
Nelements&Ndofs&\multicolumn{1}{r}{$\|u-u_h\|_{L^2}$}&rate&\multicolumn{1}{r}{$\vert u-u_h\vert_{H^1}$}&rate\\[3pt]
\hline
\multicolumn{6}{c}{$k= 1$}\\
\hline
5243 & 1321 & $1.301\times 10^{-2}$ & -- & $3.623\times 10^{-1}$ & -- \\
36986 & 7635 & $3.265\times 10^{-3}$ & 2.36 & $1.844\times 10^{-1}$ & 1.16 \\
282071 & 51649 & $8.021\times 10^{-4}$ & 2.20 & $9.235\times 10^{-2}$ & 1.09 \\
2191566 & 374805 & $2.000\times 10^{-4}$ & 2.10 & $4.624\times 10^{-2}$ & 1.05 \\
17352530 & 2864003 & $4.948\times 10^{-5}$ & 2.06 & $2.307\times 10^{-2}$ & 1.03 \\
138171918 & 22340094 & $1.248\times 10^{-5}$ & 2.01 & $1.150\times 10^{-2}$ & 1.02 \\
\hline
\multicolumn{6}{c}{$k= 2$}\\
\hline
1012 & 1935 & $2.271\times 10^{-3}$ & -- & $5.184\times 10^{-2}$ & -- \\
5243 & 8756 & $5.550\times 10^{-4}$ & 2.80 & $1.557\times 10^{-2}$ & 2.39 \\
36986 & 55535 & $1.303\times 10^{-4}$ & 2.35 & $4.759\times 10^{-3}$ & 1.93 \\
282071 & 398290 & $3.182\times 10^{-5}$ & 2.15 & $1.473\times 10^{-3}$ & 1.79 \\
2191566 & 2991492 & $7.988\times 10^{-6}$ & 2.06 & $4.855\times 10^{-4}$ & 1.65 \\
17352530 & 23280977 & $2.061\times 10^{-6}$ & 1.98 & $1.665\times 10^{-4}$ & 1.56 \\
\hline
\multicolumn{6}{c}{$k= 3$}\\
\hline
364 & 2210 & $3.099\times 10^{-3}$ & -- & $4.743\times 10^{-2}$ & -- \\
1544 & 8652 & $1.147\times 10^{-3}$ & 2.18 & $1.982\times 10^{-2}$ & 1.92 \\
9665 & 49272 & $3.344\times 10^{-4}$ & 2.13 & $6.702\times 10^{-3}$ & 1.87 \\
72571 & 348698 & $8.677\times 10^{-5}$ & 2.07 & $2.138\times 10^{-3}$ & 1.75 \\
554579 & 2573625 & $2.027\times 10^{-5}$ & 2.18 & $6.706\times 10^{-4}$ & 1.74 \\
4366891 & 19913036 & $5.128\times 10^{-6}$ & 2.02 & $2.275\times 10^{-4}$ & 1.59 \\
\hline
\end{tabular}%
}%
{\centering\ifdim\wd0>\linewidth \resizebox{\linewidth}{!}{\usebox0}\else\usebox0\fi\par}
\par\vspace{10pt}
\caption{Convergence rate for the solution of the Poisson problem on the cube minus a sphere geometry.}
\label{tb:lap_4hole3D}
\end{minipage}%
\par
\end{table}

\end{example}

\begin{example}
\emph{(Reaction-)diffusion problem: Neumann boundary conditions.}

We repeat the same set of computations for the variational problem described
in Section~\ref{se:neumann}. We enforce homogeneous Neumann boundary
conditions on the curved parts of the boundary and Dirichlet boundary
conditions on the remaining parts (matching the exact solution $u$). The data
are chosen so that the exact solutions $u$ read as follows:
\[
\aligned
&u(x,y)=(x+y)e^{-(x^2+y^2)/2}&&\text{for the circle}\\
&u(x,y)=xy(x^2+y^2-1/2)&&\text{for the square minus the circle}\\
&u(x,y,z)=xe^{-(x^2+y^2+z^2)/2}&&\text{for the sphere}\\
&u(x,y,z)=xyz(x^2+y^2+z^2-5/12)&&\text{for the cube minus the sphere}
\endaligned
\]
In all cases $u$ has vanishing normal derivative along the curved parts of the
boundary of $\Omega$. When mixed boundary conditions are considered, only the
diffusion part of the equation is solved since the reaction term is not needed
for the well-posedness.

We compute the errors in the $H^1(\Oh)$ seminorm and in the $L^2(\Oh)$ norm,
and report the results in Tables~\ref{tb:lapN_circle}, \ref{tb:lapN_4hole},
\ref{tb:lapN_sphere}, and~\ref{tb:lapN_4hole3D}, respectively.

\begin{table}
\setlength{\tabcolsep}{3pt}
\noindent%
\begin{minipage}[t]{\dimexpr(\linewidth-1\dimexpr0.03\linewidth\relax)/2\relax}
\setlength{\abovecaptionskip}{0pt}\setlength{\belowcaptionskip}{0pt}
\footnotesize
\sbox0{%
\begin{tabular}[t]{rr|cr|cr}
\hline\hline\\[-8pt]
Nelements&Ndofs&\multicolumn{1}{r}{$\|u-u_h\|_{L^2}$}&rate&\multicolumn{1}{r}{$\vert u-u_h\vert_{H^1}$}&rate\\[3pt]
\hline
\multicolumn{6}{c}{$k= 1$}\\
\hline
8424 & 4314 & $2.066\times 10^{-4}$ & -- & $1.984\times 10^{-2}$ & -- \\
33419 & 16912 & $5.184\times 10^{-5}$ & 2.02 & $9.930\times 10^{-3}$ & 1.01 \\
139743 & 70275 & $1.293\times 10^{-5}$ & 1.95 & $4.951\times 10^{-3}$ & 0.98 \\
548805 & 275208 & $3.228\times 10^{-6}$ & 2.03 & $2.472\times 10^{-3}$ & 1.02 \\
2146423 & 1074821 & $8.097\times 10^{-7}$ & 2.03 & $1.240\times 10^{-3}$ & 1.01 \\
8527920 & 4267178 & $2.029\times 10^{-7}$ & 2.01 & $6.209\times 10^{-4}$ & 1.00 \\
\hline
\multicolumn{6}{c}{$k= 2$}\\
\hline
8424 & 17051 & $9.500\times 10^{-5}$ & -- & $2.456\times 10^{-4}$ & -- \\
33419 & 67242 & $2.387\times 10^{-5}$ & 2.01 & $6.251\times 10^{-5}$ & 1.99 \\
139743 & 280292 & $5.982\times 10^{-6}$ & 1.94 & $1.505\times 10^{-5}$ & 1.99 \\
548805 & 1099220 & $1.497\times 10^{-6}$ & 2.03 & $3.788\times 10^{-6}$ & 2.02 \\
2146423 & 4296064 & $3.742\times 10^{-7}$ & 2.03 & $9.613\times 10^{-7}$ & 2.01 \\
8527920 & 17062275 & $9.219\times 10^{-8}$ & 2.03 & $2.430\times 10^{-7}$ & 1.99 \\
\hline
\multicolumn{6}{c}{$k= 3$}\\
\hline
8424 & 38212 & $9.491\times 10^{-5}$ & -- & $7.120\times 10^{-4}$ & -- \\
33419 & 150991 & $2.386\times 10^{-5}$ & 2.01 & $2.468\times 10^{-4}$ & 1.54 \\
139743 & 630052 & $5.980\times 10^{-6}$ & 1.94 & $8.689\times 10^{-5}$ & 1.46 \\
548805 & 2472037 & $1.496\times 10^{-6}$ & 2.03 & $3.050\times 10^{-5}$ & 1.53 \\
2146423 & 9663730 & $3.687\times 10^{-7}$ & 2.05 & $1.077\times 10^{-5}$ & 1.53 \\
8527920 & 38385292 & $7.042\times 10^{-8}$ & 2.40 & $3.798\times 10^{-6}$ & 1.51 \\
\hline
\end{tabular}%
}%
{\centering\ifdim\wd0>\linewidth \resizebox{\linewidth}{!}{\usebox0}\else\usebox0\fi\par}
\par\vspace{10pt}
\caption{Convergence rate for the solution of the reaction-diffusion problem on the circle with Neumann boundary conditions.}
\label{tb:lapN_circle}
\end{minipage}%
\hfill
\begin{minipage}[t]{\dimexpr(\linewidth-1\dimexpr0.03\linewidth\relax)/2\relax}
\setlength{\abovecaptionskip}{0pt}\setlength{\belowcaptionskip}{0pt}
\footnotesize
\sbox0{%
\begin{tabular}[t]{rr|cr|cr}
\hline\hline\\[-8pt]
Nelements&Ndofs&\multicolumn{1}{r}{$\|u-u_h\|_{L^2}$}&rate&\multicolumn{1}{r}{$\vert u-u_h\vert_{H^1}$}&rate\\[3pt]
\hline
\multicolumn{6}{c}{$k= 1$}\\
\hline
8785 & 4515 & $6.798\times 10^{-5}$ & -- & $1.542\times 10^{-2}$ & -- \\
35037 & 17762 & $1.687\times 10^{-5}$ & 2.03 & $7.633\times 10^{-3}$ & 1.03 \\
139544 & 70258 & $4.210\times 10^{-6}$ & 2.02 & $3.817\times 10^{-3}$ & 1.01 \\
557133 & 279537 & $1.055\times 10^{-6}$ & 2.00 & $1.912\times 10^{-3}$ & 1.00 \\
2230087 & 1116983 & $2.633\times 10^{-7}$ & 2.00 & $9.549\times 10^{-4}$ & 1.00 \\
8919855 & 4463805 & $6.579\times 10^{-8}$ & 2.00 & $4.774\times 10^{-4}$ & 1.00 \\
\hline
\multicolumn{6}{c}{$k= 2$}\\
\hline
8785 & 17814 & $2.246\times 10^{-6}$ & -- & $6.651\times 10^{-5}$ & -- \\
35037 & 70560 & $5.718\times 10^{-7}$ & 1.99 & $1.650\times 10^{-5}$ & 2.03 \\
139544 & 280059 & $1.429\times 10^{-7}$ & 2.01 & $4.125\times 10^{-6}$ & 2.01 \\
557133 & 1116206 & $3.589\times 10^{-8}$ & 2.00 & $1.035\times 10^{-6}$ & 2.00 \\
2230087 & 4464052 & $8.968\times 10^{-9}$ & 2.00 & $2.584\times 10^{-7}$ & 2.00 \\
8919855 & 17847464 & $2.142\times 10^{-9}$ & 2.07 & $6.454\times 10^{-8}$ & 2.00 \\
\hline
\multicolumn{6}{c}{$k= 3$}\\
\hline
8785 & 39898 & $2.244\times 10^{-6}$ & -- & $2.735\times 10^{-5}$ & -- \\
35037 & 158395 & $5.719\times 10^{-7}$ & 1.98 & $9.368\times 10^{-6}$ & 1.55 \\
139544 & 629404 & $1.429\times 10^{-7}$ & 2.01 & $3.197\times 10^{-6}$ & 1.56 \\
557133 & 2510008 & $3.590\times 10^{-8}$ & 2.00 & $1.124\times 10^{-6}$ & 1.51 \\
2230087 & 10041208 & $8.989\times 10^{-9}$ & 2.00 & $3.937\times 10^{-7}$ & 1.51 \\
8919855 & 40150978 & $2.215\times 10^{-9}$ & 2.02 & $1.385\times 10^{-7}$ & 1.51 \\
\hline
\end{tabular}%
}%
{\centering\ifdim\wd0>\linewidth \resizebox{\linewidth}{!}{\usebox0}\else\usebox0\fi\par}
\par\vspace{10pt}
\caption{Convergence rate for the solution of the Poisson problem on the square minus a circle geometry with mixed boundary conditions.}
\label{tb:lapN_4hole}
\end{minipage}%
\par
\par\vspace{1.5em}
\noindent%
\begin{minipage}[t]{\dimexpr(\linewidth-1\dimexpr0.03\linewidth\relax)/2\relax}
\setlength{\abovecaptionskip}{0pt}\setlength{\belowcaptionskip}{0pt}
\footnotesize
\sbox0{%
\begin{tabular}[t]{rr|cr|cr}
\hline\hline\\[-8pt]
Nelements&Ndofs&\multicolumn{1}{r}{$\|u-u_h\|_{L^2}$}&rate&\multicolumn{1}{r}{$\vert u-u_h\vert_{H^1}$}&rate\\[3pt]
\hline
\multicolumn{6}{c}{$k= 1$}\\
\hline
5430 & 1259 & $7.208\times 10^{-3}$ & -- & $1.169\times 10^{-1}$ & -- \\
40105 & 7825 & $1.773\times 10^{-3}$ & 2.30 & $5.779\times 10^{-2}$ & 1.16 \\
320817 & 56747 & $4.375\times 10^{-4}$ & 2.12 & $2.850\times 10^{-2}$ & 1.07 \\
2473741 & 416162 & $1.085\times 10^{-4}$ & 2.10 & $1.421\times 10^{-2}$ & 1.05 \\
19491401 & 3191093 & $2.708\times 10^{-5}$ & 2.04 & $7.095\times 10^{-3}$ & 1.02 \\
154257125 & 24854019 & $6.892\times 10^{-6}$ & 2.00 & $3.545\times 10^{-3}$ & 1.01 \\
\hline
\multicolumn{6}{c}{$k= 2$}\\
\hline
728 & 1337 & $1.089\times 10^{-2}$ & -- & $3.127\times 10^{-2}$ & -- \\
5430 & 8648 & $2.627\times 10^{-3}$ & 2.29 & $8.318\times 10^{-3}$ & 2.13 \\
40105 & 58448 & $6.601\times 10^{-4}$ & 2.17 & $2.228\times 10^{-3}$ & 2.07 \\
320817 & 445238 & $1.677\times 10^{-4}$ & 2.02 & $6.064\times 10^{-4}$ & 1.92 \\
2473741 & 3349542 & $4.114\times 10^{-5}$ & 2.09 & $1.720\times 10^{-4}$ & 1.87 \\
19491401 & 26047504 & $1.024\times 10^{-5}$ & 2.03 & $5.182\times 10^{-5}$ & 1.75 \\
\hline
\multicolumn{6}{c}{$k= 3$}\\
\hline
271 & 1634 & $2.264\times 10^{-2}$ & -- & $6.147\times 10^{-2}$ & -- \\
1467 & 7948 & $6.109\times 10^{-3}$ & 2.48 & $1.794\times 10^{-2}$ & 2.34 \\
10790 & 53640 & $1.583\times 10^{-3}$ & 2.12 & $5.333\times 10^{-3}$ & 1.91 \\
80074 & 378740 & $4.123\times 10^{-4}$ & 2.06 & $1.700\times 10^{-3}$ & 1.75 \\
622920 & 2867341 & $1.040\times 10^{-4}$ & 2.04 & $5.609\times 10^{-4}$ & 1.64 \\
4936952 & 22416570 & $2.608\times 10^{-5}$ & 2.02 & $1.889\times 10^{-4}$ & 1.59 \\
\hline
\end{tabular}%
}%
{\centering\ifdim\wd0>\linewidth \resizebox{\linewidth}{!}{\usebox0}\else\usebox0\fi\par}
\par\vspace{10pt}
\caption{Convergence rate for the solution of the reaction-diffusion problem on the sphere with Neumann boundary conditions.}
\label{tb:lapN_sphere}
\end{minipage}%
\hfill
\begin{minipage}[t]{\dimexpr(\linewidth-1\dimexpr0.03\linewidth\relax)/2\relax}
\setlength{\abovecaptionskip}{0pt}\setlength{\belowcaptionskip}{0pt}
\footnotesize
\sbox0{%
\begin{tabular}[t]{rr|cr|cr}
\hline\hline\\[-8pt]
Nelements&Ndofs&\multicolumn{1}{r}{$\|u-u_h\|_{L^2}$}&rate&\multicolumn{1}{r}{$\vert u-u_h\vert_{H^1}$}&rate\\[3pt]
\hline
\multicolumn{6}{c}{$k= 1$}\\
\hline
5243 & 1321 & $3.241\times 10^{-3}$ & -- & $1.478\times 10^{-1}$ & -- \\
36986 & 7635 & $8.094\times 10^{-4}$ & 2.37 & $7.618\times 10^{-2}$ & 1.13 \\
282071 & 51649 & $1.991\times 10^{-4}$ & 2.20 & $3.834\times 10^{-2}$ & 1.08 \\
2191566 & 374805 & $4.966\times 10^{-5}$ & 2.10 & $1.906\times 10^{-2}$ & 1.06 \\
17352530 & 2864003 & $1.233\times 10^{-5}$ & 2.06 & $9.480\times 10^{-3}$ & 1.03 \\
138171918 & 22340094 & $3.106\times 10^{-6}$ & 2.01 & $4.724\times 10^{-3}$ & 1.02 \\
\hline
\multicolumn{6}{c}{$k= 2$}\\
\hline
1012 & 1935 & $4.122\times 10^{-4}$ & -- & $1.639\times 10^{-2}$ & -- \\
5243 & 8756 & $7.171\times 10^{-5}$ & 3.48 & $5.300\times 10^{-3}$ & 2.24 \\
36986 & 55535 & $9.753\times 10^{-6}$ & 3.24 & $1.385\times 10^{-3}$ & 2.18 \\
282071 & 398290 & $1.305\times 10^{-6}$ & 3.06 & $3.504\times 10^{-4}$ & 2.09 \\
2191566 & 2991492 & $1.891\times 10^{-7}$ & 2.87 & $8.766\times 10^{-5}$ & 2.06 \\
17352530 & 23280977 & $3.466\times 10^{-8}$ & 2.48 & $2.187\times 10^{-5}$ & 2.03 \\
\hline
\multicolumn{6}{c}{$k= 3$}\\
\hline
364 & 2210 & $6.648\times 10^{-5}$ & -- & $2.233\times 10^{-3}$ & -- \\
1544 & 8652 & $1.573\times 10^{-5}$ & 3.17 & $4.598\times 10^{-4}$ & 3.47 \\
9665 & 49272 & $4.684\times 10^{-6}$ & 2.09 & $7.624\times 10^{-5}$ & 3.10 \\
72571 & 348698 & $1.256\times 10^{-6}$ & 2.02 & $1.562\times 10^{-5}$ & 2.43 \\
554579 & 2573625 & $2.854\times 10^{-7}$ & 2.22 & $3.760\times 10^{-6}$ & 2.14 \\
4366891 & 19913036 & $7.046\times 10^{-8}$ & 2.05 & $1.129\times 10^{-6}$ & 1.76 \\
\hline
\end{tabular}%
}%
{\centering\ifdim\wd0>\linewidth \resizebox{\linewidth}{!}{\usebox0}\else\usebox0\fi\par}
\par\vspace{10pt}
\caption{Convergence rate for the solution of the Poisson problem on the cube minus a sphere geometry with mixed boundary conditions.}
\label{tb:lapN_4hole3D}
\end{minipage}%
\par
\end{table}

The results confirm the theoretical barriers recalled in
Remark~\ref{re:barrier4} with three apparent exceptions. Namely, both
two-dimensional cases have a superconvergent behavior for the $H^1$ seminorm
when $k=2$ is used. While the theory predicts $O(h^{3/2})$, the numerical
tests show $O(h^2)$.
Similarly, the last three-dimensional test, shown in
Table~\ref{tb:lapN_4hole3D}, has a superconvergent behavior when $k=2$ is
used.

The behavior in the two-dimensional cases can be explained theoretically and
is due to a lucky cancellation of terms arising from some symmetries of
polynomials of degree two and the regions $\Smt$ and $\Spt$ in our domains.

The superconvergence of Table~\ref{tb:lapN_4hole3D}, however, has to be
interpreted as a pre-asymptotic regime. There are two reasons for this: on the
one hand, the curved part of the boundary is not a significant portion of it;
on the other hand, the chosen exact solution has some features that reduce the
impact of the geometry mismatch. Table~\ref{tb:lapN_4hole3D_R0p75} shows the
results of a similar computation where a larger sphere (radius $3/4$ instead
of $1/2$) is subtracted from the unit cube. The exact solution in this case is
\[
u(x,y,z)=xyz(x^2+y^2+z^2-15/16).
\]
The first phenomenon is alleviated, while the second one is still present and
we can see that the $L^2$ convergence is restored to the capped second order,
while the $H^1$ error is still superconverging.

\begin{table}
\setlength{\tabcolsep}{3pt}
\noindent\hspace*{\fill}%
\begin{minipage}[t]{\dimexpr(\linewidth-1\dimexpr0.03\linewidth\relax)/2\relax}
\setlength{\abovecaptionskip}{0pt}\setlength{\belowcaptionskip}{0pt}
\footnotesize
\sbox0{%
\begin{tabular}[t]{rr|cr|cr}
\hline\hline\\[-8pt]
Nelements&Ndofs&\multicolumn{1}{r}{$\|u-u_h\|_{L^2}$}&rate&\multicolumn{1}{r}{$\vert u-u_h\vert_{H^1}$}&rate\\[3pt]
\hline
\multicolumn{6}{c}{$k= 1$}\\
\hline
2340 & 648 & $4.815\times 10^{-3}$ & -- & $1.518\times 10^{-1}$ & -- \\
15730 & 3547 & $1.247\times 10^{-3}$ & 2.38 & $8.386\times 10^{-2}$ & 1.05 \\
118434 & 22731 & $3.022\times 10^{-4}$ & 2.29 & $4.133\times 10^{-2}$ & 1.14 \\
921457 & 162108 & $7.306\times 10^{-5}$ & 2.17 & $2.043\times 10^{-2}$ & 1.08 \\
7266769 & 1218299 & $1.793\times 10^{-5}$ & 2.09 & $1.016\times 10^{-2}$ & 1.04 \\
57562510 & 9373960 & $4.522\times 10^{-6}$ & 2.03 & $5.061\times 10^{-3}$ & 1.03 \\
\hline
\multicolumn{6}{c}{$k= 2$}\\
\hline
887 & 1742 & $4.109\times 10^{-4}$ & -- & $1.498\times 10^{-2}$ & -- \\
4511 & 7656 & $7.358\times 10^{-5}$ & 3.49 & $4.809\times 10^{-3}$ & 2.30 \\
31038 & 47269 & $1.324\times 10^{-5}$ & 2.83 & $1.312\times 10^{-3}$ & 2.14 \\
236633 & 337124 & $2.792\times 10^{-6}$ & 2.38 & $3.289\times 10^{-4}$ & 2.11 \\
1832322 & 2512610 & $6.359\times 10^{-7}$ & 2.21 & $8.234\times 10^{-5}$ & 2.07 \\
14488188 & 19488116 & $1.516\times 10^{-7}$ & 2.10 & $2.057\times 10^{-5}$ & 2.03 \\
\hline
\multicolumn{6}{c}{$k= 3$}\\
\hline
293 & 1842 & $3.227\times 10^{-4}$ & -- & $3.329\times 10^{-3}$ & -- \\
1301 & 7458 & $9.262\times 10^{-5}$ & 2.68 & $8.203\times 10^{-4}$ & 3.01 \\
8258 & 42765 & $2.246\times 10^{-5}$ & 2.43 & $1.975\times 10^{-4}$ & 2.45 \\
61065 & 296185 & $5.404\times 10^{-6}$ & 2.21 & $5.359\times 10^{-5}$ & 2.02 \\
465680 & 2171883 & $1.440\times 10^{-6}$ & 1.99 & $1.677\times 10^{-5}$ & 1.75 \\
3647187 & 16674769 & $3.892\times 10^{-7}$ & 1.93 & $5.375\times 10^{-6}$ & 1.67 \\
\hline
\end{tabular}%
}%
{\centering\ifdim\wd0>\linewidth \resizebox{\linewidth}{!}{\usebox0}\else\usebox0\fi\par}
\par\vspace{10pt}
\caption{Convergence rate for the solution of the Poisson problem on the cube minus a sphere of radius $R=0.75$ geometry with mixed boundary conditions.}
\label{tb:lapN_4hole3D_R0p75}
\end{minipage}%
\hspace*{\fill}\par
\end{table}

This phenomenon will be further investigated in a future project.

\end{example}

\section*{AI disclosure}
\label{se:AI}
The authors used AI tools for several tasks. They drafted the computational
codes and then asked first Copilot and then, more extensively, Claude to create an
organic integrated package (available on request). Different models of Claude
were used, ranging from Opus 4.7 to the latest Fable 5.1. 
While the authors are fully responsible for the content of this manuscript, on
several occasions they shared ideas and checked some results with the AI tool.
Claude, in particular, pointed out to the authors the need to use $\Spt$ and
$\Smt$ instead of $\Tt\setminus T$ and $T\setminus\Tt$, respectively, in three
dimensions.
The same AI tool was used to check the grammar and typos, as well as to
generate figures and tables.

\section*{Acknowledgments}

DB is a member of the INdAM research group GNCS and his work was partially 
supported by KAUST -- CRG13-2025 grant (6911). LG is a member of the INdAM 
research group GNCS.

\bibliographystyle{siamplain}
\bibliography{ref}

\end{document}